\theoremstyle{plain}
\newtheorem{ithm}{Theorem}[section]
\newtheorem{thm}{Theorem}[section]
\newtheorem{lemma}[thm]{Lemma}
\newtheorem{corollary}[thm]{Corollary}
\newtheorem{prop}[thm]{Proposition}
 \theoremstyle{remark}
\newtheorem{para}[thm]{\bf} 
\theoremstyle{definition}
\newtheorem{remark}[thm]{Remark}
\newtheorem{definition}[thm]{Definition}
\numberwithin{equation}{section}
\newcommand{\ga}[2]{\begin{gather}\label{#1}#2 \end{gather}}
\newcommand{\surj}{\twoheadrightarrow}
\newcommand{\Hom}{{\rm Hom}}
\newcommand{\Ext}{{\rm Ext}}
\newcommand{\dR}{{\rm dR}}
\newcommand{\Spec}{{\rm Spec \,}}
\newcommand{\Char}{{\rm char}}
\newcommand{\Gal}{{\rm Gal}}
\newcommand{\SIsom}{\underline{\rm Isom}}
\newcommand{\SAut}{\underline{\rm Aut}}
\newcommand{\rk}{{\rm rk}}
\newcommand{\sD}{{\mathcal D}}
\newcommand{\sH}{{\mathcal H}}
\newcommand{\sN}{{\mathcal N}}
\newcommand{\sO}{{\mathcal O}}
\newcommand{\sS}{{\mathcal S}}
\newcommand{\C}{{\mathbb C}}
\newcommand{\F}{{\mathbb F}}
\newcommand{\cM}{{\mathcal M}}
\newcommand{\Q}{{\mathbb Q}}
\newcommand{\Z}{{\mathbb Z}}
\newcommand{\iso}{\overset\sim\rightarrow}
\title {$D$-modules and finite monodromy} 
\author{H\'el\`ene Esnault and Mark Kisin} 
\address{Freie Universit\"at Berlin, Arnimallee 3, 14195 Berlin,  Germany}
\email{esnault@math.fu-berlin.de}
\address{Department of Mathematics, Harvard University, 1 Oxford Str., Cambridge, MA 02138, USA  
 }
\email{ kisin@math.harvard.edu}
\thanks{The first  author is supported by  the Einstein program. The second author was partially supported by NSF grant DMS-0017749000}
\date{\today} 
\begin{document}

\begin{abstract}  
We investigate an analogue of the Grothendieck $p$-curvature conjecture, where the vanishing of the $p$-curvature is replaced by the stronger condition, that the 
module with connection mod $p$ underlies a $\sD_X$-module structure. We show that this weaker conjecture holds in various situations, for example if the 
underlying vector bundle is finite in the sense of Nori, or if the connection underlies a $\Z$-variation of Hodge structure. We also show isotriviality assuming 
a coprimality condition on certain mod $p$ Tannakian fundamental groups, which in particular resolves in the projective case  a conjecture of Matzat-van der Put.  

\end{abstract}
\maketitle
 
\section{Introduction}\label{intro}

Let $X/\C$ be a smooth complex  variety, and $M = (E,\nabla)$  a vector bundle with an integrable connection on $X.$  Recall that the Grothendieck $p$-curvature conjecture \cite[I]{Kat72} 
predicts that $M$ has finite monodromy if it has a full set of algebraic solutions mod $p$ for almost all primes $p.$ 
More precisely, we can descend $(X,M)$ to a finitely generated $\Z$-algebra $R \subset \C$ and consider its reductions $(X_s, M_s)$ at closed points $s \in \Spec R.$ 
We consider the following condition:
\begin{quote}
 $\frak{P}:$ there is a dense open subscheme $U\hookrightarrow {\rm Spec} \ R$ such that for all closed points $s\in U$, $M_s$ has $p$-curvature $0$. 
\end{quote}
The conjecture says that this condition implies the existence of a finite \'etale cover $h: Y\to X$ such that $h^*M$ is trivial as a connection; that is $M$ is isotrivial.  
The conjecture is known to be true when the monodromy representation of $M$ is solvable  
(\cite[Thm.~8.5]{Chu85},   \cite[Thm.~2.9]{Bos01}, \cite[Cor.~4.3.2]{And04}),)
 and 
for Gau{\ss}-Manin connections \cite[Thm.~5.1]{ Kat72}. 

The condition $\frak{P}$ means that, Zariski locally, $M_s = (E_s,\nabla_s)$ is spanned by the kernel of $\nabla_s.$ This is equivalent to asking that the action of derivations on $E_s,$ 
given by $\nabla_s$,  extends to an action of differential operators of the form $\frac{(\partial /\partial x)^p} p$ where $x$ is a local co-ordinate on $X.$ Of course, if $M$ becomes trivial over a finite cover then this condition holds, but in that case one has, in fact, a stronger condition: the action of the derivations extends to an action of the full ring of differential operators $\sD_{X_s}.$ 
With this motivation, we consider in this paper the stronger condition
\begin{quote}
$\frak{D} :$ there is a dense open subscheme $U\hookrightarrow {\rm Spec} \ R $ such that for all closed points $s\in U$, $M_s$ 
underlies a $\sD_{X_s}$-module. 
\end{quote}

We denote by $MIC^{\frak{D}}(X/\C)$ the category of vector bundles with integrable connections on $X$ which satisfy $\frak{D}.$

 Unfortunately, we were not able to show that $\frak{D}$ implies that $M$ is isotrivial.
\footnote{In fact  one can show that $\frak{D}$ implies the isotriviality of $M$ by combining Theorem~\ref{ithm:finitenessats} below with Theorem~\cite[Thm.~3.3]{Mat06}.
Unfortunately,  there is a mistake in the proof of Theorem~\cite[Thm.~3.1]{Mat06}, so the proof of Theorem~\cite[Thm.~3.3]{Mat06} is incomplete.}
However, we show some partial results. We assume for the rest of the introduction that $X$ is projective. By \cite[Thm.8.1 (1), (3)]{Kat82}, the $p$-curvature conjecture
can be reduced to this case. See also \S \ref{ss:MvdP} for a discussion on the projectivity assumption. 
Our first theorem shows that one can deduce the isotriviality if we add a finiteness condition on the underlying vector bundle $E.$

\begin{ithm}\label{ithm:Nori} The forgetful functor $(E,\nabla) \mapsto E$ from $MIC^{\frak{D}}(X/\C)$ to the category of vector bundles on $X,$ 
is fully faithful.  In particular, if $E$ is Nori finite, then $M$ is isotrivial.
\end{ithm}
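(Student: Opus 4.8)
The plan is to reduce the full faithfulness statement to a single assertion about global sections, prove that assertion by reduction modulo $p$, and then deduce the Nori-finite consequence by a Tannakian argument.

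First, full faithfulness. Faithfulness is automatic, since a horizontal map is determined by the underlying $\sO_X$-linear map. For fullness, given $M_i=(E_i,\nabla_i)\in MIC^{\frak{D}}(X/\C)$, I would form the internal Hom $\underline{\Hom}(M_1,M_2)=(\underline{\Hom}(E_1,E_2),\nabla)$. Because the condition $\frak{D}$ is preserved under tensor products and internal Hom (the $\sD_{X_s}$-module structures on $M_{1,s},M_{2,s}$ induce one on the internal Hom), this object again lies in $MIC^{\frak{D}}(X/\C)$. Under this identification an $\sO_X$-linear map $E_1\to E_2$ is a global section of the underlying bundle of $\underline{\Hom}(M_1,M_2)$, and it is horizontal exactly when that section lies in the kernel of $\nabla$. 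Thus full faithfulness follows from the Key Lemma: for every $N=(F,\nabla)\in MIC^{\frak{D}}(X/\C)$ with $X$ projective, every global section of $F$ is horizontal, i.e. $H^0(X,F)=H^0(X,F)^{\nabla}$.

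I would prove the Key Lemma by spreading out $(X,N)$ over a finitely generated $\Z$-algebra $R$ and working modulo $p$. A global section $t\in H^0(X,F)$ spreads to a section over $X_R$, and $\nabla t$ is a section of the coherent sheaf $F_R\otimes\Omega^1_{X_R/R}$; since $H^0(X_R,F_R\otimes\Omega^1_{X_R/R})$ is a finitely generated $R$-module, it suffices to show that the reduction $\nabla_s t_s$ vanishes for a dense set of closed points $s$, for then $\nabla t$, vanishing on a dense open of $\Spec R$, must vanish at the generic point and hence over $\C$. By $\frak{D}$, for $s$ in a dense open $U$ the reduction $N_s=(F_s,\nabla_s)$ underlies a $\sD_{X_s}$-module on the smooth projective variety $X_s$, so the whole problem reduces to the characteristic-$p$ statement: on a smooth projective variety over a finite field, every global section of an $\sO$-coherent $\sD_X$-module is horizontal. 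For this statement, which I expect to be the main obstacle, I would base change to $\overline{\F}_p$ and use that an $\sO$-coherent $\sD_X$-module is a stratified bundle, equivalently a compatible tower $F=F_0$, $F_n\iso F^*F_{n+1}$ of Frobenius descents, and is therefore strongly semistable of slope $0$. A nonzero global section $t$ gives $\sO_X\to F$ whose saturation is a degree-$0$ sub-line bundle, which by semistability forces $\sO_X\hookrightarrow F$ to be a subbundle; the point is then to promote this to a sub-$\sD$-module, i.e. to show $t$ is horizontal. The sections horizontal to level $m$ form a descending chain $H^0(X,F)=W_0\supseteq W_1\supseteq\cdots$ with $\bigcap_m W_m=H^0(X,F)^{\nabla}$, coming from the descended bundles $F_m$, and the content is that this chain is already constant. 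Extracting this from the uniformity of the stratification together with the slope-$0$ semistability of each $F_m$ is the delicate char-$p$ input (it is exactly where phenomena such as the Hasse invariant could intervene) and is the crux of the argument.

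Finally, the Nori-finite consequence. Granting full faithfulness, the forgetful functor $\omega\colon MIC^{\frak{D}}(X/\C)\to \Vect(X)$ is exact, faithful, $\otimes$-compatible, and now fully faithful. Suppose $E$ is Nori finite and set $M=(E,\nabla)$. The Tannakian subcategory $\langle M\rangle^{\otimes}\subseteq MIC^{\frak{D}}(X/\C)$ maps under $\omega$ into $\langle E\rangle^{\otimes}\subseteq\Vect(X)$, which is the representation category of a finite group scheme over $\C$, hence semisimple with finitely many simple objects. Since $\omega$ is fully faithful and exact it injects on isomorphism classes of simple objects and injects on $\Ext^1$; the latter forces $\langle M\rangle^{\otimes}$ to be semisimple as well, and the former bounds its number of simple objects. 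Therefore the Tannakian fundamental group $\pi(M)$ is reductive over $\C$ with only finitely many irreducible representations, so $\pi(M)$ is a finite group scheme. Viewing $M$ inside $MIC(X/\C)$, this says $M$ has finite monodromy, i.e. $M$ is isotrivial, as claimed.
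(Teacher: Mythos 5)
Your overall architecture matches the paper's: full faithfulness is reduced, via internal Hom, to the claim that every global section of the underlying bundle of an object of $MIC^{\frak{D}}(X/\C)$ is horizontal, and this is reduced by spreading out and taking fibres at closed points to the assertion that, on a smooth projective variety over a \emph{finite} field, every global section of an $\sO$-coherent $\sD_X$-module is a $\sD_X$-morphism from $\sO_X$. The problem is that you stop exactly at this assertion. You describe the descending chain $W_0\supseteq W_1\supseteq\cdots$ of sections descending through the Frobenius tower and say that showing it is constant ``is the delicate char-$p$ input \dots and is the crux of the argument''. That is indeed the crux, and it is not supplied. Slope-$0$ strong semistability by itself will not do it: finite-dimensionality makes the chain stabilize eventually, but nothing in your sketch forces $W_0=W_1$, which is what horizontality of all global sections amounts to.

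The missing ingredient is Theorem~\ref{thm:fin_tan}: over a finite field $k$, any $\sO_X$-coherent $\sD_X$-module is trivialized on a finite \'etale cover, after which horizontality of global sections can be checked \'etale locally, where it is clear. The proof of that theorem uses arithmetic in an essential way: after a d\'evissage to the stable case via \cite{EM10}, the Frobenius-divided bundles $E^{(i)}$ all define $k$-points of Langer's quasi-projective coarse moduli space $\cM$ of stable bundles with vanishing numerical Chern classes; $\cM(k)$ is \emph{finite}, so the tower is periodic, one gets $F^{at*}E^{(at)}\cong E^{(0)}$ with $E^{(at)}\cong E^{(0)}$ over $k$ (Lang's lemma handles the descent of the isomorphism from $\bar k$), and Lange--Stuhler then gives the \'etale trivialization. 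Note that this is precisely why you should not base change to $\overline{\F}_p$ as you propose: you lose the finiteness of $\cM(k)$ that powers the whole argument. Your final Tannakian step, by contrast, is essentially sound and is a legitimate variant of the paper's route (the paper instead shows every $\sS(X)$-subobject of $E$ is $\nabla$-stable and invokes \cite[Prop.~2.21]{DM82} to conclude that $G(E,x)\to G(M,x)$ is an isomorphism); to make your version airtight you would still need the semistability of the underlying bundles (Proposition~\ref{prop:ss}) so that $\omega$ genuinely lands in the Tannakian category $\langle E\rangle^{\otimes}$ and ``$\End(\omega(S))=\C$ implies $\omega(S)$ simple'' applies.
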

Recall that Nori finiteness means that the class of $E$ in the Grothendieck group associated with the monoid of vector bundles on $X$ (see \cite[Section~2.3]{Nor82})
is integral over $\Z$, or equivalently, as we are in characteristic $0$,  that there is a finite \'etale cover $h: Y\to X$, such that $f^*E$ is trivial as a vector bundle; that is   $E$ is isotrivial. 
Our next result is an analogue of Katz's theorem on the Gau\ss-Manin connection. 

\begin{ithm}\label{ithm:katz} If $M$ in $MIC^{\frak{D}}(X/\C)$ underlies a polarizable $\Z$-variation of Hodge structure, then $M$ is isotrivial.
\end{ithm}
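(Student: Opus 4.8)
The plan is to show that the two hypotheses play complementary roles: the entire force of the condition $\frak{D}$ will be spent on proving that the \emph{Higgs field} of the variation vanishes, and once that is known the conclusion follows from the integrality of the underlying local system together with the Hodge--Riemann relations, with no further reference to $\frak{D}$. Write the polarizable $\Z$-variation as $(V_\Z, F^\bullet, Q)$ with underlying $M=(E,\nabla)$, and let $\theta\colon \gr_F E \to \gr_F E \otimes \Omega^1_X$ be the $\sO_X$-linear map induced by $\nabla$ through Griffiths transversality. The key reduction is to prove that $\frak{D}$ forces $\theta = 0$; this is where the ``analogue of Katz'' genuinely lies, exactly as in the Gau\ss--Manin case the heart is the vanishing of the Kodaira--Spencer map.

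I would first record the easy implication $\theta = 0 \Rightarrow$ isotriviality. If $\theta = 0$ then $\nabla$ preserves $F^\bullet$, and since the variation is defined over $\R$ the conjugate Higgs field $\bar\theta$ (the adjoint of $\theta$ for the Hodge metric, with $\overline{H^{p,q}}=H^{q,p}$) vanishes as well; hence the $C^\infty$ Hodge decomposition $E=\bigoplus_p H^{p,q}$ is flat. On each flat summand the polarization $Q$ induces, via the Hodge--Riemann bilinear relations, a flat positive definite Hermitian form, so the monodromy representation $\pi_1(X)\to \GL(V_\C)$ lands in the compact group $\prod_p U(H^{p,q})$. As it also preserves the lattice $V_\Z$, its image lies in the intersection of a compact subgroup of $\GL(V_\C)$ with the discrete subgroup $\GL(V_\Z)$, hence is finite; finite monodromy is precisely isotriviality. (Note that, in contrast to Theorem~\ref{ithm:Nori}, this endgame uses only integrality: a single unitary piece need not be Nori finite, but the \emph{integral} sum is.)

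For the crux, $\frak{D}\Rightarrow\theta=0$, I would spread the whole package $(X,E,\nabla,F^\bullet,Q)$ out over a finitely generated $\Z$-algebra $R$, so that for $s$ in a dense open subscheme of $\Spec R$ one has $X_s$ smooth projective, a reduced filtration $F^\bullet_s$ satisfying Griffiths transversality, and, by $\frak{D}$, a $\sD_{X_s}$-module structure on $M_s$. Since both the vanishing of $\theta$ and finiteness of monodromy may be tested after restricting to a general complete-intersection curve $C\hookrightarrow X$ (using $\pi_1(C)\twoheadrightarrow\pi_1(X)$ and the compatibility of $\frak{D}$ and of the variation with restriction), I would reduce to $\dim X = 1$. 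On the curve a $\sD_{X_s}$-module that is a vector bundle is a stratified bundle, so it carries an $F$-divided structure $E_s \cong F^*E_s^{(1)} \cong F^{2*}E_s^{(2)}\cong\cdots$ with compatible connections of $p$-curvature $0$. The engine is to play this infinite Frobenius divisibility against the degrees of the Hodge sub-bundles: over $\C$, polarizability and Simpson's theorem make $(\gr_F E,\theta)$ polystable of degree $0$, and when $\theta\neq 0$ the positivity of Hodge bundles forces some step $F^{p_0}E$ to have nonzero degree $d$, an integer independent of $p$. If the $F$-divided structure transports the Hodge filtration, then $E_s^{(n)}$ acquires a sub-bundle of degree $d/p^n$; integrality of degrees then forces $d=0$ for all large $n$, a contradiction, whence $\theta=0$.

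The hard part will be the last compatibility. The condition $\frak{D}$ supplies a stratification, i.e.\ a Frobenius-descent datum, but a priori nothing links it to the reduction $F^\bullet_s$ of the Hodge filtration; constructing a canonical Hodge/weight filtration on each descendant $E_s^{(n)}$ and controlling how $F$ acts on its graded pieces (the ``strong divisibility'' dividing degrees by $p$) is the technical heart. I expect this to require the Cartier transform of Ogus--Vologodsky in the level-$<p$ range, together with the interpretation of $\frak{D}$ as periodicity of the associated Higgs--de Rham flow of Lan--Sheng--Zuo, and a careful spreading-out ensuring that these structures, and the integer $d$, are simultaneously defined for almost all $s$. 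Once $\theta=0$ is secured the proof closes with the elementary compactness argument above.
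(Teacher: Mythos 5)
Your endgame ($\theta=0$ implies unitary plus integral implies finite monodromy) and your reduction to a curve both match the paper, but the crux of your argument --- that $\frak{D}$ forces $\theta=0$ --- is not actually established, and you say so yourself: everything hinges on the Frobenius-descent data mod $p$ ``transporting the Hodge filtration'' so that $\deg F^{p_0}E_s$ gets divided by $p$ at each step. There is no reason for this to hold. The condition $\frak{D}$ supplies \emph{some} $\sD_{X_s}$-module structure extending $\nabla_s$, with no a priori compatibility with the reduction of the Hodge filtration; the descended bundles $E_s^{(n)}$ carry no canonical Hodge subbundles, and making them do so is essentially the (hard, and here unnecessary) theory of periodic Higgs--de Rham flows. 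As written, the proof has a hole exactly at its technical heart.

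The paper closes this gap by a much softer mechanism, which you have all the ingredients for but did not assemble. The characteristic-$p$ results (Theorem~\ref{thm:fin_tan}, Corollary~\ref{cor:stab}) show that $E_s$ is Nori-finite, hence strongly semistable with vanishing Chern classes; after using Deligne semisimplicity to reduce to $M$ irreducible, Proposition~\ref{prop:ss} upgrades this to: $E$ itself is \emph{stable} of degree $0$ over $\C$. That is the only input from $\frak{D}$. Then on the Higgs side one only needs the single lowest graded piece $\sH^{n-a,a}$ ($a$ maximal): it is a Higgs subbundle with zero Higgs field, so $\deg\sH^{n-a,a}\le 0$ by semistability of $(V,\theta)$, while it is also a quotient bundle of $E$; stability of $E$ at degree $0$ forces $E=\sH^{n-a,a}$, hence $\theta=0$. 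Note that your own inequality $\deg F^{p_0}E\ge 0$ (from Higgs polystability) combined with $\deg F^{p_0}E\le 0$ (from semistability of $E$, which you never invoke) would already kill $\theta$ without any characteristic-$p$ transport of the filtration. So the fix is not more $p$-adic machinery but the observation that $\frak{D}$ controls the \emph{bundle} $E$, and the Hodge filtration then takes care of itself over $\C$.
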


To prove these results we use arguments involving stability of vector bundles, together with the following theorem, set purely in characteristic $p.$
\begin{ithm}\label{ithm:finitenessats} Let $X_0$ be a smooth projective, geometrically connected scheme over a finite field $k,$ and let $M_0$ be a coherent $\sD_{X_0}$-module on $X_0.$ 
Then $M_0$ is isotrivial.
\end{ithm}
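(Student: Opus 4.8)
The plan is to convert the $\sD_{X_0}$-module structure into infinite Frobenius descent and then exploit that $k$ is finite. Since $M_0$ is $\sO_{X_0}$-coherent and carries a $\sD_{X_0}$-action, it is a stratified bundle; in particular its underlying sheaf is locally free of some rank $r$, and iterated Cartier descent turns the stratification into a tower of vector bundles $M_n$ on $X_0$ equipped with $\sD_{X_0}$-linear isomorphisms $M_n \cong F^* M_{n+1}$, where $F\colon X_0 \to X_0$ is the $q$-power Frobenius ($q = \#k$), viewed as a $k$-endomorphism via the canonical identification $X_0^{(q)} \cong X_0$, and $M_0$ is the bundle we started with. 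Conversely this compatible tower recovers the $\sD_{X_0}$-module, so no information is lost.

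The heart of the matter is that, over a \emph{finite} field, the whole tower falls into finitely many isomorphism classes. First, because $F$ is finite flat and $F^*$ multiplies the codimension-$i$ Chern class by $q^i$, the relation $M_0 \cong (F^n)^* M_n$ gives $c_i(M_0) = q^{ni}\,c_i(M_n)$, so each $c_i(M_0)$ is divisible by arbitrarily large powers of $q$ in the finitely generated group of cycles modulo numerical equivalence; hence all Chern classes of $M_0$, and likewise of every $M_n$, are numerically trivial. Second, each $M_n$ is a stratified bundle and is therefore semistable (a standard property: its Harder--Narasimhan filtration consists of sub-$\sD$-modules, which are again stratified and hence of degree $0$, leaving no destabilizing subsheaf). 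Thus the $M_n$ are all semistable of rank $r$ with the fixed Hilbert polynomial $P$ of $\sO_{X_0}^r$; such sheaves are bounded and are parametrized by $k$-points of a finite-type quotient of a Quot scheme over $k$. Since $k$ is finite there are only finitely many isomorphism classes among them, so by the pigeonhole principle $M_i \cong M_{i+s}$ for some $i \ge 0$ and $s \ge 1$.

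Composing such an isomorphism with the descent isomorphisms yields $(F^s)^* M_i \cong M_i$; that is, the underlying bundle $E := M_i$ is Frobenius-periodic. By the theorem of Lange--Stuhler, a bundle with an isomorphism $(F^s)^* E \xrightarrow{\sim} E$ corresponds to a continuous representation $\pi_1^{\mathrm{et}}(X_0) \to \GL_r(\F_{q^s})$, whose image is automatically finite; in particular $E$, and then $M_0 \cong (F^i)^* M_i$, has trivial underlying bundle after a finite \'etale cover $h\colon Y_0 \to X_0$ (using that the absolute Frobenius commutes with $h$). It remains to upgrade this to the level of $\sD$-modules. After replacing $X_0$ by $Y_0$ one is reduced to a stratified bundle with \emph{trivial} underlying bundle $\sO_{Y_0}^r$; here the stratification data on a fixed bundle is itself governed by finite-dimensional, hence finite, data over $k$, so a refined pigeonhole should produce a \emph{horizontal} periodicity isomorphism, which by Lange--Stuhler's correspondence identifies $M_0$ with the stratified bundle attached to a finite-monodromy \'etale local system. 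Hence $M_0$ is trivialized as a $\sD_{X_0}$-module by a finite \'etale cover, i.e. $M_0$ is isotrivial.

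The step I expect to be the main obstacle is the last one: passing from Frobenius-periodicity of the \emph{underlying bundle}, which Lange--Stuhler delivers readily, to isotriviality of the full \emph{$\sD$-module}. The subtlety is that the periodicity isomorphism produced by the pigeonhole need not be horizontal, so one must show separately that the stratification is carried along --- equivalently, that the stratified fundamental group of $X_0$ over a finite field is pro-finite, so that every $\sD_{X_0}$-module already comes from $\pi_1^{\mathrm{et}}(X_0)$. Finiteness of $k$ is indispensable throughout: over $\overline{\F}_p$ the families in the second paragraph acquire positive-dimensional moduli and both the finiteness and the conclusion fail. A secondary technical input, used for boundedness, is that stratified bundles are semistable with numerically trivial Chern classes.
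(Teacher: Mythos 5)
Your argument, up to the point where the underlying bundles are trivialized on a finite \'etale cover, is essentially the paper's proof in the boundedness variant of \cite{BK08} that the authors discuss in Remark~\ref{rmk:BK}: Frobenius descent, numerical triviality of Chern classes, semistability, finiteness of isomorphism classes over the finite field, pigeonhole, Lange--Stuhler. Two remarks on that part. First, you invoke boundedness of semistable sheaves to get finiteness of isomorphism classes; the paper instead reduces via \cite[Prop.~2.3]{EM10} to the case where all Frobenius descendants are $\chi$-\emph{stable}, so as to use the quasi-projective coarse moduli space of \cite{Lan14} (whose points are isomorphism classes, not merely $S$-equivalence classes), and then handles the remaining extensions by a separate pigeonhole on the finite set $\Ext^1_{\sO_Y}(\sO_Y,\sO_Y)=H^1(Y,\sO_Y)$. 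If you stay with boundedness you must say explicitly why it bounds isomorphism classes \emph{over $k$}: every such bundle is a quotient point of a single Quot scheme of finite type over the finite field $k$. Second, since $F^*$ induces a surjective, hence bijective, self-map of the finite set of classes $[M_n]$, \emph{every} $M_n$ is Frobenius-periodic, and as there are finitely many isomorphism classes among them a \emph{single} finite \'etale cover $h\colon Y_0\to X_0$ trivializes $h^*M_n$ for all $n$ simultaneously. You will need this, not just the triviality of $h^*M_0$.

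The genuine gap is the final step, which you flag yourself. The missing ingredient is not a ``refined pigeonhole producing a horizontal periodicity isomorphism''; it is \cite[Prop.~1.7]{Gie75}: a stratified bundle all of whose Frobenius descendants are trivial vector bundles is trivial as a $\sD$-module (its horizontal sections are computed as $\varprojlim_n H^0(Y_0, h^*M_n)$, which has full rank when every term is trivial of rank $r$). This is exactly what the simultaneous trivialization above supplies, and it requires no horizontality of the isomorphism coming from the pigeonhole. Note also that your fallback suggestion --- that the stratified fundamental group of $X_0$ over a finite field is profinite --- is precisely the statement being proved, so it cannot be invoked; and your parenthetical claim that a Frobenius-periodicity isomorphism alone forces triviality of the stratification via the Lange--Stuhler correspondence is not justified as written, since that correspondence concerns the bundle, not the $\sD$-module structure.
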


This is proved using the existence of the course moduli space of stable vector bundles, and a finiteness argument. 
A consequence of this last theorem, is that any $M_0$ as in the theorem defines a finite \'etale Tannakian group scheme. 
Returning to $M$ over $X/\C$ satisfying $\frak{D}$, we define for $s$ in a non-trivial open in $\Spec R$ the corresponding 
\'etale group scheme $G_s = G(M_s).$ We denote by $k(s)$ the residue field of $s.$ Our final result is 

\begin{ithm}\label{ithm:finiteness} If there is a Zariski dense set of closed points $s \in \Spec R$ such that 
 $\Char \, k(s)$ does not divide the order of $G_s,$ then $M \in MIC^{\frak{D}}(X/\C)$ is isotrivial.
\end{ithm}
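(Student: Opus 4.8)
The plan is to reduce the statement to a property of the underlying vector bundle $E$, and then to exploit the tameness built into the coprimality hypothesis in order to descend the mod $p$ trivializations to characteristic $0$. By Theorem~\ref{ithm:Nori} the forgetful functor from $MIC^{\mathfrak D}(X/\C)$ to vector bundles is fully faithful, so it suffices to prove that $E$ is Nori finite; Theorem~\ref{ithm:Nori} then gives the isotriviality of $M$. Fix a model of $(X,M)$ over $R$ with $X\to \Spec R$ smooth projective with geometrically connected fibres, write $r=\rank E$, let $\bar\eta$ be a geometric generic point and $\bar s$ a geometric point over a closed point $s$, and let $\Sigma\subset \Spec R$ be the Zariski dense set of closed points for which $\mathfrak D$ holds and $\Char k(s)\nmid |G_s|$. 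For $s\in\Sigma$, Theorem~\ref{ithm:finitenessats} shows that $M_s$ is isotrivial and that $G_s=G(M_s)$ is finite \'etale; the trivializing $G_s$-torsor $h_s\colon Y_s\to X_s$ in particular trivializes the bundle $E_s$, so $E_s$ is isotrivial for every $s\in\Sigma$.

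Next I would use the coprimality to transport these covers to the generic fibre. Since $p_s:=\Char k(s)$ does not divide $|G_s|=\deg h_s$, the cover $Y_{\bar s}\to X_{\bar s}$ is tame, so the corresponding quotient of $\pi_1(X_{\bar s})$ factors through its maximal prime-to-$p_s$ quotient. By Grothendieck's specialization theorem (SGA~1) for the smooth proper family $X\to\Spec R$, the specialization map $\pi_1(X_{\bar\eta})\to\pi_1(X_{\bar s})$ induces an isomorphism on maximal prime-to-$p_s$ quotients; hence $h_s$ lifts canonically to a finite \'etale $G_s$-cover of $X_{\bar\eta}$, which spreads out to a cover over a dense open of $\Spec R$ whose reduction at $s$ recovers $h_s$. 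Granting a uniform bound $|G_s|\le N$ for $s\in\Sigma$, the topologically finitely generated group $\pi_1(X_{\bar\eta})$ has only finitely many finite \'etale covers of degree $\le N$, so the lifted covers range over a finite set. Pigeonholing over $\Sigma$ then produces a single finite \'etale Galois cover $h\colon Y\to X$, defined over a dense open $\Spec R'\subset\Spec R$, such that $(h^*E)_s=h_s^*E_s$ is trivial for a Zariski dense set of $s$.

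It then remains to descend triviality of $F:=h^*E$ from a dense set of special fibres to the generic fibre. Here I would argue by rigidity: $F$ is fibrewise semistable of slope $0$ with vanishing Chern classes, these being closed conditions satisfied on the dense set of trivial fibres; the class of $\det F$ defines a section of the relative Picard scheme $\Pic_{Y/R'}$ that vanishes on a dense set of closed points and hence identically; and the functions $s\mapsto h^0(F_s)$ and $s\mapsto h^0(F_s^{\vee})$ are upper semicontinuous and equal to $r$ on the dense set. Together these force $F_{\bar\eta}\cong\sO^{\oplus r}$, so $h^*E$ is trivial over $\C$ and $E$ is Nori finite. Theorem~\ref{ithm:Nori} then yields that $M$ is isotrivial.

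The main obstacle is the uniform bound $|G_s|\le N$ used in the second step, without which the lifted covers need not come from finitely many covers of $X_{\bar\eta}$ and the pigeonhole fails. I expect to obtain it from the circle of ideas underlying Theorem~\ref{ithm:finitenessats}: the coarse moduli space of semistable bundles with vanishing Chern classes is of finite type, and an honest $\sD_{X_s}$-module structure, as opposed to a merely Frobenius-periodic bundle, pins $E_s$ down to a bounded family as $s$ varies, while the faithful representation $G_s\hookrightarrow \GL_r$ together with the coprimality $p_s\nmid|G_s|$ controls the prime-to-$p_s$ part via a Jordan--Minkowski type estimate. It is precisely here that the full strength of $\mathfrak D$ rather than $\mathfrak P$, and the coprimality hypothesis, enter; the subtlety is that without coprimality the passage to the special fibre can collapse the Tannakian group, so that a positive-dimensional generic monodromy group could remain invisible modulo $p$.
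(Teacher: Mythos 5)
There is a genuine gap, and it is exactly the one you flag yourself: the uniform bound $|G_s|\le N$. Without it the pigeonhole over the lifted covers fails, and the bound cannot be obtained by the route you sketch. Jordan's theorem \cite{Jor78} (applied to $G_s\hookrightarrow \GL_r(\overline{k(s)})$, which is legitimate here because $\Char\,k(s)\nmid |G_s|$ allows one to lift to characteristic $0$) bounds only the index of a normal \emph{abelian} subgroup $N_s\subset G_{\bar s}$ by a constant $c(r)$; it says nothing about $|N_s|$, and abelian subgroups of $\GL_r(\overline{\F_p})$ of order prime to $p$ are unbounded (diagonal matrices with roots of unity of large order). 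A Minkowski-type bound would require integrality or rationality of the representation, which is not available. Nor does the moduli-space finiteness behind Theorem~\ref{ithm:finitenessats} help: it is a statement over a \emph{fixed} finite field, and the residue fields $k(s)$ grow with $s$. In fact a uniform bound on $|G_s|$ is essentially equivalent to what is to be proved, so this step begs the question. The same problem already appears in rank $1$: there the hypothesis gives only that $G_s$ is cyclic of order prime to $p_s$ with no a priori bound, and deciding whether such an $M$ is isotrivial is the abelian heart of the $p$-curvature conjecture, which is not accessible by specialization and semicontinuity arguments alone.

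The paper's proof (Theorem~\ref{thm:Dp}) is built precisely to avoid needing such a bound. It uses Jordan's theorem only to produce the quotients $G_{\bar s}/N_s$ of order $\le c(r)$; these, pulled back through the surjection $\pi_1(X_{\bar k})\surj \pi_1(X_{\bar s})$ and the finite generation of $\pi_1(X_{\bar k})$, all factor through one finite Galois cover $h:Y\to X$. After replacing $X$ by $Y$ the groups $G_s$ are abelian, whence every irreducible subquotient $M'$ of $M$ is stable with $M'_s$ stable of rank $1$ for a dense set of $s$, so $M'$ has rank $1$ and $M$ has solvable monodromy. The conclusion then rests on Andr\'e's theorem \cite[Cor.~4.3.2]{And04} resolving the $p$-curvature conjecture in the solvable case --- a deep transcendence input (Chudnovsky--Bost--Andr\'e) for which your argument offers no substitute. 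Your opening reduction via Theorem~\ref{ithm:Nori} to Nori-finiteness of $E$, and the use of tameness and the prime-to-$p$ specialization isomorphism to lift the covers, are sound in themselves and are a genuinely different framing; but as it stands the proof does not close.
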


In particular this proves a projective analogue of a conjecture of Matzat-van der Put \cite[p.~51]{MP03}, which predicted isotriviality assuming the order of the $G_s$ was bounded independently of $s.$
In fact we explain in \S \ref{ss:MvdP}  
that the original conjecture in {\it loc.~cit}, which dealt with a Zariski open in the affine line, is not correct. 
 
The paper is organized as follows. In \S \ref{sec:p}, we prove Theorem~\ref{ithm:finitenessats}. The method has already been used in \cite{BK08}  (see Remark~\ref{rmk:BK}) and \cite{EM10}. We push it further to prove isotriviality of the whole $\sO$-coherent $\sD$-module. From it we deduce in \S \ref{sec:0} that the underlying vector bundle $E$ of $M$ satisfying $\frak{D}$ is semistable, and is  stable if $M$ is irreducible (Proposition~\ref{prop:ss}).  We then deduce Theorem \ref{ithm:Nori} and use Hitchin-Simpson theory, to deduce Theorem~\ref{ithm:katz}. To prove  isotriviality in Theorem~\ref{ithm:finiteness}, one uses an idea of Andr\'e, who applied  Jordan's theorem \cite{Jor78} in \cite[7.1.3.~Cor.]{And04} to reduce the $p$-curvature conjecture to the case of a number field. This idea in equal characteristic $p>0$ was carried over in \cite[Thm.~5.1]{EL13}, where the coprimality to $p$ appeared as a necessary condition (\cite[Section~4]{EL13}).  The difficulty of the mixed characteristic version presented here is made easier by the fact that the  groups in characteristic $p>0$ are finite. 

{\it Acknowledgements:} We thank Yves Andr\'e, Antoine Chambert-Loir
and Johan de Jong for their interest and for discussions. We especially thank Sinan \"Unver for a close, and perceptive reading of the manuscript, and Jo\~ao Pedro dos Santos for mentioning \cite{Mat06} to us.  The first named author thanks the  department of  mathematics of Harvard University for hospitality during the preparation of this work. 

\section{   $\sO$-coherent $\sD$-modules over finite fields.    } \label{sec:p}
\begin{para} 
Let $X$ be a smooth, geometrically connected, projective variety  over a field $k.$ We fix an ample line bundle $\sO_X(1)$ on $X.$ 
For a coherent sheaf $E$ on $X,$ we set $p_E(n) = \chi(E(n))/\rk\, E,$ where $\chi$ denotes the Euler characteristic. 
We say that $E$ is $\chi$-semi-stable (resp.~$\chi$-stable) if for all proper subsheaves  $E' \subset E$ one has 
$p_{E'}(n) \leq p_E(n)$ (resp. $p_{E'}(n) < p_E(n)$) for $n$ sufficiently large (see \cite[\S 0]{Gie77}, \cite[p.~512]{Lan14}). 
Similarly if $(E,\nabla)$ is a vector bundle with integrable connection, we defined $\chi$-(semi-)stability in the same 
way, but we require $E'$ to be $\nabla$-stable.
\end{para}

\begin{para}
Suppose that $k = \F_{p^a}$ is a finite field. 
For $i \geq 0,$ we denote by $X^{(i)}$ the pullback of $X$ by $F_k^i,$ where $F_k$ is the absolute Frobenius on $k.$ 
We have the relative Frobenius maps $F: X^{(i)}\to X^{(i+1)}.$ 

Let $M$ be a $\sO_X$-coherent $\sD_X$-module.   Associated to $M$ we have a Frobenius divided sheaf 
$(E^{(i)}, \sigma^{(i)})_{i\ge 0}$, where $E^{(i)}$ is a vector bundle on $X^{(i)},$ with $E^{(0)} = M,$ and $\sigma^{(i)}$ is an isomorphism 
 $E^{(i)} \iso F^*E^{(i+1)}$ over $X^{(i)}$. In fact the categories of $\sO_X$-coherent $\sD_X$-modules and of Frobenius divided sheaves are equivalent  (\cite[Thm.~1.3]{Gie75}). 
The $E^{(i)}$ have trivial numerical Chern classes as these classes are infinitely $p$-power divisible and by definition lie in $\Z$.  

Then $M$ generates a $k$-linear  Tannakian subcategory $\langle M\rangle $ of the category of $\sO_X$-coherent $\sD_X$-modules.
If $x\in X(k),$  taking the fibre at $x$ of $E^{(0)}$  defines a  neutral fibre functor on $\langle M\rangle.$ 
Let $G(M, x)$ be the Tannaka group of $\langle M\rangle$. 
\end{para}

\begin{thm} \label{thm:fin_tan}
The group scheme $G(M, x)$ over $k$ is finite \'etale.
\end{thm}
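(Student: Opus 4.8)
The plan is to identify the objects of $\langle M\rangle$ with étale local systems of finite monodromy, and to read off finiteness and étaleness of $G(M,x)$ from that identification. First I would observe that $M$ is a faithful object of $\langle M\rangle$, so $G(M,x)$ is a closed subgroup scheme of $\GL(M_x)$, in particular an affine algebraic group over $k$; it then remains to show it is zero-dimensional and reduced. Every object of $\langle M\rangle$ is again an $\sO_X$-coherent $\sD_X$-module with numerically trivial Chern classes, hence is recorded by a Frobenius-divided sheaf $(E^{(i)},\sigma^{(i)})_{i\ge 0}$. Since $k=\F_{p^a}$ we have $X^{(a)}=X$, and the composite of $a$ relative Frobenii is the $q$-power Frobenius $F_q\colon X\to X$ with $q=p^a$; moreover the intermediate terms are determined by $E^{(ja+l)}=(F^{\,a-l})^{*}E^{((j+1)a)}$ for $0\le l\le a$. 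Thus the $\sD$-module is equivalent to the $F_q$-divided tower $(W_j,\tau_j)_{j\ge 0}$ with $W_j=E^{(ja)}$ and $\tau_j\colon W_j\iso F_q^{*}W_{j+1}$, and my task reduces to understanding such towers.

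Next I would pin down the bundles $W_j$. Each $W_j$ underlies an $\sO$-coherent $\sD$-module, hence is $\chi$-semistable with numerically trivial Chern classes of the fixed rank $r=\rk M$. The crucial finiteness input is that, over the \emph{finite} field $k$, there are only finitely many isomorphism classes of such bundles: by boundedness of semistable sheaves with fixed Hilbert polynomial they all factor through a $k$-scheme of finite type (a piece of a Quot scheme, equivalently through the coarse moduli space of stable bundles together with the finitely many iterated extensions of its points), and a $k$-scheme of finite type has only finitely many $k$-points. Calling this finite set $S_r$, we get $[W_j]\in S_r$ for all $j$.

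Now the tower lives in a finite set, and the relation $W_j\cong F_q^{*}W_{j+1}$ reads $[W_j]=f([W_{j+1}])$ for the self-map $f=F_q^{*}$ of $S_r$. A compatible infinite system under a self-map of a finite set is forced into the eventual image $\bigcap_n f^{n}(S_r)$, on which $f$ is a permutation; hence $[W_{j+N}]=[W_j]$ for all $j$, with $N$ the order of that permutation. Choosing an isomorphism $W_N\cong W_0$ and composing with the canonical identification $W_0\iso (F_q^{N})^{*}W_N=(F_{q^{N}})^{*}W_0$ supplied by the $\tau_j$, I obtain a vector bundle $W=W_0$ together with a Frobenius structure $\phi\colon W\iso (F_{q^{N}})^{*}W$, which — using invertibility of $f$ on the eventual image and finiteness of the relevant $\Hom$ and automorphism groups over $k$ — carries the tower up to isomorphism. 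Such a pair $(W,\phi)$ is exactly Frobenius descent datum for an étale $\F_{q^{N}}$-local system: the fixed subsheaf $W^{\phi=1}$ is a lisse sheaf of $\F_{q^{N}}$-vector spaces of rank $r$ with $W=W^{\phi=1}\otimes_{\F_{q^{N}}}\sO_X$ (Lang/Artin--Schreier descent), i.e. a continuous representation $\pi_1^{et}(X,x)\to\GL_r(\F_{q^{N}})$. As $\pi_1^{et}$ is profinite while the target is \emph{finite}, the image is finite; matching tensor structures, $\langle M\rangle$ is equivalent to the representation category of a finite quotient of $\pi_1^{et}(X,x)$, so $G(M,x)$ is a finite étale group scheme over $k$.

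I expect the hard part, and the place where the hypotheses are genuinely used, to be the passage from a single $\sD$-module to the finite set $S_r$. This needs two things: that $\sO$-coherent $\sD$-modules are $\chi$-semistable with vanishing numerical Chern classes — which I would prove by descending the (canonical, hence Frobenius-stable) maximal destabilizing subsheaf through the Frobenius-divided structure, noting that its positive slope would be divided by a power of $p$ at each stage while the denominators stay bounded, a contradiction — and the finiteness of $k$-points of the moduli space, which is precisely where finiteness of $k$ enters. Everything afterwards (periodicity, Frobenius descent, finiteness of monodromy) is formal. I would stress that mere finiteness of $S_r$ for each $r$ does \emph{not} by itself bound $G$, since for instance $\mathrm{SL}_2$ has one irreducible of each rank; it is the \emph{periodicity} — the promotion of the tower to a genuine Frobenius structure, hence to a finite-monodromy étale local system — that forces $G$ to be finite as well as étale.
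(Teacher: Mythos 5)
Your argument is correct in outline and reaches the theorem by a route that is genuinely, if modestly, different from the paper's. Both proofs rest on the same engine: over the finite field the Frobenius-divided tower moves through a finite set of bundle classes, pigeonhole plus the backward recursion $[W_j]=F_q^{*}[W_{j+1}]$ forces periodicity from $j=0$ on, the resulting isomorphism $W_0\iso (F_{q^N})^{*}W_0$ yields \'etale trivializability, and finiteness and \'etaleness of $G(M,x)$ follow. Where you differ is in how the finite set is produced. The paper uses Langer's coarse moduli space of $\chi$-stable bundles (\cite[Thm.~1.1]{Lan14}, \cite[Thm.~0.2]{Lan04}), which only covers the stable case; the general case is then reduced to it via \cite[Prop.~2.3]{EM10} (after finitely many Frobenius descents the tower is a successive extension of towers with stable constituents), together with a separate finiteness argument for the extension classes using that $\Ext^1_{\sO_Y}(\sO_Y,\sO_Y)$ is a finite set. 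You instead take for $S_r$ the set of $k$-isomorphism classes of \emph{all} semistable bundles of the given rank with vanishing numerical Chern classes, finite by boundedness (every such bundle occurs at a $k$-point of a single Quot scheme). This is the route of \cite{BK08} (see Remark~\ref{rmk:BK}) pushed through to the full statement; it handles the strictly semistable case in one stroke and also makes Lemma~\ref{lem:isomvect} unnecessary, since the Quot-scheme count is of isomorphism classes over $k$ itself, whereas $k$-points of a coarse moduli space only detect isomorphism over $\bar k$. Your use of Lang/Artin--Schreier descent to produce the \'etale local system from $(W,\phi)$ is the same content as the paper's citation of \cite[Satz~1.4]{LS77}.

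Three points should be tightened. First, $F_q^{*}$ is not a self-map of $S_r$ (Frobenius pullback need not preserve semistability), so the ``eventual image'' formalism does not literally apply; but the $W_j$ are strongly semistable, and in any case finiteness of $\{[W_j]\}$ together with the backward recursion already forces periodicity from $j=0$, so this is cosmetic. Second, your destabilizing-subsheaf sketch proves $\mu$-semistability rather than $\chi$-semistability; that suffices, since Langer's boundedness theorem applies to $\mu$-semistable sheaves with fixed Hilbert polynomial, but you should say which notion you are using. Third, and most substantively, your last sentence glosses over the passage from ``the bundles $E^{(i)}$ are all trivialized on a common finite \'etale cover $h:Y\to X$'' (a common cover exists because periodicity leaves only finitely many isomorphism classes among the $E^{(i)}$) to ``$h^{*}M$ is trivial as a $\sD_Y$-module, hence $\langle M\rangle$ embeds into $\Rep$ of a finite \'etale group scheme.'' This step is \cite[Prop.~1.7]{Gie75} and uses projectivity of $Y$; without it you control only the bundle $E^{(0)}$, not the Tannakian category $\langle M\rangle$, and hence not the group $G(M,x)$.
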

\begin{proof}
Suppose first that the $E^{(i)}$ are $\chi$-stable for $i\geq 0.$
Let $\cM$ be the coarse moduli space of $\chi$-stable vector bundles with  numerical vanishing Chern classes and rank equal to  $\rk \, M,$ which exists over $k$  (\cite[Thm.~1.1]{Lan14}). 
Then $\cM$ is quasi-projective  (\cite[Thm.~0.2]{Lan04}), and in particular has finitely many $k$-points. This implies that 
there exist $i \geq 0$ and $t> 0$ such that $E^{(ai)}$ and $E^{(ai+at)}$ correspond to the same point $[E^{(ai)}]=[E^{(ai+at)}]$ in $\cM(k).$ 
Hence $F^{a*}$ induces a well defined, surjective map on $S = \{[E^{(ai)}]; i \geq 0\} \subset \cM(k).$ 
That is, $F^{a*}$ is an automorphism of the finite set $S.$ It follows that 
there is a natural number $t>0$ such that the points $[E^{(ait)}]$ for $i\ge 0$ are all equal. 
This means that the vector bundles $E^{(0)}$ and $E^{(ait)}$ are isomorphic over an algebraic closure $\bar k$ of $k.$
It follows from Lemma \ref{lem:isomvect} below that they are isomorphic over $k.$

We have an isomorphism $F^{*at}E^{(i)} \iso E^{(i)}$ for $i \geq t$ divisible by $a,$ and hence for $i \geq 0.$
Therefore, there is a finite \'etale cover $h: Y\to X$ such that $h^* E^{(i)}$ is a trivial algebraic bundle for all $i\ge 0$ (\cite[Satz~1.4]{LS77}),   
and it follows that the $\sO_Y$-coherent $\sD_Y$-module $h^*M$ is trivial (\cite[Prop.~1.7]{Gie75}). 
Consequently, in the category of $\sO_X$-coherent $\sD_X$-modules, $\langle M\rangle \subset \langle h_*\sO_Y\rangle$, and $G(\langle h_*\sO_Y\rangle,x)$ is finite \'etale.
Thus $G(\langle M\rangle,x)$ is finite \'etale.  

Now consider the case of arbitrary $M.$  
By \cite[Prop.~2.3]{EM10},   there is a natural number $i_0$ such that $ (E^{(i)}, \sigma^{(i)})_{i\ge i_0a}$ is a successive extension of Frobenius divided sheaves 
$U_n$ on $X^{(i_0a)}$ all of whose underlying vector bundles $U_n^{(i)}$ are stable with vanishing numerical Chern classes.  
It suffices to prove the theorem with $(E^{(i)}, \sigma^{(i)})_{i\ge 0}$ replaced by $(E^{(i+i_0)}, \sigma^{(i+i_0)})_{i\ge 0},$ so we may assume $i_0 =0.$

By what we have seen above there exists a finite \'etale cover  $h: Y \to X$ such that $h^* (\oplus_n U_n)$ is a trivial $\sO_Y$-coherent $\sD_Y$-module. Then $h^*M$ is a successive extension of the trivial $\sO_Y$-coherent $\sD_Y$-module by itself. By induction on the number of factors $U_n,$ we may assume that $h^*M$ is an extension of $(\sO_Y)^{s_1}$ by $(\sO_Y)^{s_2}$ as $\sD_Y$-modules for some $s_1,s_2 >0.$ 
Thus, $h^*M$ is given by a matrix of classes in $\Ext^1_{\sD_Y}(\sO_Y,\sO_Y).$
Arguing with $F$-divided sheaves as above, but replacing the finiteness of $\cM(k)$ by the finiteness of the set 
$\Ext^1_{\sO_Y}(\sO_Y,\sO_Y),$ one finds that any class in $\Ext^1_{\sD_Y}(\sO_Y,\sO_Y)$ 
becomes trivial over a finite \'etale cover of $X.$ Thus $\langle M\rangle$ is finite, and $G(M, x)$ is \'etale as $M$ has  a finite \'etale trivializing cover. 
\end{proof}

\begin{lemma}\label{lem:isomvect} Let $V_1,V_2$ be vector bundles on $X,$ which are isomorphic over $\bar k.$ 
Then $V_1,V_2$ are isomorphic over $k.$
\end{lemma}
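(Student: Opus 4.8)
The plan is to realize the set of isomorphisms $V_1 \iso V_2$ as the $k$-points of an algebraic scheme over $k$, and then to exhibit this scheme as a torsor under a \emph{connected} algebraic group, so that Lang's theorem forces the existence of a rational point. Concretely, let $\pi \colon X \to \Spec k$ be the structure map and set $A = \Hom_X(V_1,V_1) = H^0(X,\mathcal{End}(V_1))$ and $H = \Hom_X(V_1,V_2) = H^0(X,\mathcal{Hom}(V_1,V_2))$; both are finite-dimensional $k$-vector spaces because $X$ is proper over $k.$ Define functors $\SAut(V_1)$ and $\SIsom(V_1,V_2)$ on $k$-algebras $R$ by sending $R$ to the $R$-linear automorphisms of $V_{1,R},$ resp.\ the $R$-linear isomorphisms $V_{1,R}\iso V_{2,R},$ over $X_R.$

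Since $\pi$ is flat and proper, the formation of $\pi_*\mathcal{Hom}(V_1,V_2)$ commutes with flat base change, so for flat $R$ one has $\Hom_{X_R}(V_{1,R},V_{2,R}) = H\otimes_k R,$ and similarly for $A.$ Hence $\SIsom(V_1,V_2)$ is represented by an open subscheme $U$ of the affine space $\A(H)$ (the locus where the determinant of the universal homomorphism is nowhere zero on the fibres of $\pi,$ which is open precisely because $\pi$ is proper), while $\SAut(V_1)$ is represented by the unit group $G$ of the finite-dimensional associative $k$-algebra $A.$ The key observation is that $G$ is an open subscheme of the affine space $\A(A)$ containing the identity: being a nonempty open in an irreducible scheme it is \emph{connected}, and as an open in affine space it is \emph{smooth}. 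Thus $G$ is a smooth connected affine algebraic group over $k.$

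Precomposition makes $\SIsom(V_1,V_2)$ a pseudo-torsor under $G$: whenever an isomorphism exists, $G$ acts on the set of isomorphisms simply transitively. The hypothesis that $V_1$ and $V_2$ become isomorphic over $\bar k$ says exactly that $U(\bar k)\neq\emptyset,$ so $U$ is geometrically nonempty and is therefore an honest $G$-torsor over $\Spec k.$ By Lang's theorem, every torsor under a smooth connected algebraic group over a finite field is trivial, that is $H^1(k,G)=0.$ Consequently $U(k)=\SIsom(V_1,V_2)(k)\neq\emptyset,$ which is precisely the statement that $V_1$ and $V_2$ are isomorphic over $k.$

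I expect the only delicate point to be the verification that $G=\SAut(V_1)$ is connected; this rests on identifying it with the unit group of the endomorphism algebra $A$ and on the fact that such a unit group is a dense open in $\A(A).$ Once connectedness is secured, Lang's theorem applies verbatim, and the representability and base-change bookkeeping for $\SIsom$ and $\SAut$ are routine consequences of the properness of $X$ over $k.$
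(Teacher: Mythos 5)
Your proof is correct and follows essentially the same route as the paper: both realize $\SIsom(V_1,V_2)$ as a torsor under $\SAut(V_1)$, observe that the latter is smooth and connected because it is Zariski open in the finite-dimensional $k$-vector space $\Hom(V_1,V_1)$, and conclude by Lang's theorem that the torsor has a $k$-point. The only cosmetic difference is that the paper writes out the Galois cocycle $(c_\sigma)$ explicitly rather than invoking $H^1(k,G)=0$ directly, and spends less time on representability, which you handle correctly.
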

\begin{proof} This is presumably well known. Consider the $k$-scheme $\SIsom(V_1,V_2),$ which assigns to any $k$-algebra $R,$ the set of invertible elements in 
$\Hom(V_1,V_2)\otimes_kR.$ Since $V_1,V_2$ are isomorphic over $\bar k,$ this is a torsor under the $k$-group scheme $\SAut(V_1),$ 
whose $R$ points are given by the units in $\Hom(V_1,V_1)\otimes_kR.$ 

For $y \in \SIsom(V_1,V_2)(\bar k)$ and $\sigma \in \Gal(\bar k/k),$ write $\sigma(y) = y\circ c_{\sigma},$ 
with $c_{\sigma} \in \SAut(V_1)(\bar k).$ Then 
 $ (c_{\sigma})$ is a cocycle, defining a class in  $ H^1(\Gal(\bar k/k), \SAut(V_1)(\bar k)).$ Since $\SAut(V_1)$ is Zariski open in a $k$-vector space, it is smooth and connected, 
so this class is trivial by Lang's lemma (\cite[Thm.~2]{Lan56}).  Thus the cocycle  $(c_\sigma)$ is a coboundary, which means that  it is
 the translate of the given point by some element of  $\SAut(V_1)(\bar k)$ is a $k$-point.
\end{proof}

\begin{para} Recall, \cite[Section~3]{Nor82} that a vector bundle $V$ on $X$ is called Nori-finite 
if its class  in the Grothendieck group associated with the monoid of vector bundles on $X$ (see \cite[Section~2.3]{Nor82})
is integral over $\Z$. Equivalently,  there is a torsor under a finite group scheme  $h: Y\to X \otimes k'$, such that $f^*E$ is trivial as a vector bundle. Here $k'\supset k$ is a  finite field extension such that $X(k') \neq \emptyset$.  
 Nori-finite bundles are, in particular, strongly semistable (that is, the bundle and all its Frobenius pullbacks are semi-stable) vector bundles with vanishing numerical Chern classes \cite[Cor.~3.5]{Nor82}.
The category $\sN(X)$ of Nori-finite bundles is Tannakian. For any $x \in X(k),$ taking the fibre at $x$ is a  neutral fibre functor on $\sN(X),$ and 
each object $E$ has a finite Tannakian group scheme $G(E,x)$. 
\end{para}
\begin{corollary} \label{cor:stab}
\begin{itemize}
\item[(1)]
The vector bundles  $E^{(i)}$ are Nori-finite. In particular, they are strongly $\chi$-semistable with vanishing numerical Chern classes. 
\item[(2)]  If $E^{(i)}$ is $\chi$-stable for some  $i\ge 0,$ then $E^{(i)}$ is $\chi$-stable for all $i \geq 0.$
\item[(3)] If $M$ is $\chi$-stable as a module with integrable connection, then $M = E^{(0)}$ is $\chi$-stable as a vector bundle.
\end{itemize}
\end{corollary}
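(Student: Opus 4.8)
The plan is to deduce (1) directly from Theorem~\ref{thm:fin_tan}, to prove (2) by combining a Frobenius-pullback estimate with a moduli-theoretic periodicity, and to obtain (3) from (2) via Cartier descent.

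For (1): applying Theorem~\ref{thm:fin_tan} to the shifted $\sO_{X^{(i)}}$-coherent $\sD_{X^{(i)}}$-module whose underlying Frobenius divided sheaf is $(E^{(i+j)},\sigma^{(i+j)})_{j\ge 0}$, one gets that this module has a finite \'etale trivializing cover; in particular its underlying bundle $E^{(i)}$ is trivialized by a finite \'etale cover, hence is Nori-finite. The \emph{in particular} clause is then \cite[Cor.~3.5]{Nor82}. I will use repeatedly that, by (1), every $E^{(i)}$ has vanishing numerical Chern classes, so by Riemann--Roch all the $E^{(i)}$, and all their saturated subsheaves of vanishing class, share the single reduced Hilbert polynomial $P_0(n)=\chi(\sO_X(n))$; thus $\chi$-stability of such a bundle just says that every proper saturated subsheaf has reduced Hilbert polynomial strictly below $P_0$ for $n$ large.

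For (2), the implication \textit{upward in $i$} is easy. If $E^{(i+1)}$ is not $\chi$-stable it contains a proper saturated subsheaf $W$ with $p_W=P_0$, which then has vanishing numerical Chern classes; since $F$ is flat, $F^*W$ is a proper saturated subsheaf of $F^*E^{(i+1)}\cong E^{(i)}$ with vanishing classes, so $p_{F^*W}=P_0$ and $E^{(i)}$ is not $\chi$-stable. Hence $E^{(i_0)}$ $\chi$-stable forces $E^{(i)}$ $\chi$-stable for all $i\ge i_0$. The implication \textit{downward in $i$} is the main obstacle, because the Frobenius pullback of a stable bundle need not be stable; I would circumvent this by periodicity rather than by pulling stability down directly. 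Fix $i$ and consider the bundles $E^{(i+aj)}$, $j\ge 0$, all living on $X^{(i)}$ and linked by $[E^{(i+aj)}]=F^{a*}[E^{(i+a(j+1))}]$. For $j$ large these are $\chi$-stable by the upward step, so their classes lie in the finite set $\cM(k)$, and the displayed relation shows $F^{a*}$ is a surjective, hence bijective, self-map of their finite orbit; if $T$ is its order, then $E^{(i+aj)}\cong E^{(i+a(j+T))}$ over $\bar k$, and so over $k$ by Lemma~\ref{lem:isomvect}, for all large $j$. Applying $(F^{aj})^*$ to such an isomorphism and using $(F^{aj})^*E^{(i+aj)}\cong E^{(i)}$ from the Frobenius divided structure propagates it down to $j=0$, giving $E^{(i)}\cong E^{(i+aT)}$ over $k$; iterating, $E^{(i)}\cong E^{(i+aT\ell)}$ over $k$ with $i+aT\ell\ge i_0$, whose right-hand side is $\chi$-stable. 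Since $\chi$-stability transfers along a $k$-isomorphism, $E^{(i)}$ is $\chi$-stable, which proves (2).

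Finally (3): the connection on $E^{(0)}=F^*E^{(1)}$ coming from the Frobenius divided structure is the canonical one, so it has vanishing $p$-curvature, and Cartier descent identifies the $\nabla$-stable subsheaves of $E^{(0)}$ precisely with the subsheaves $F^*E''$ for $E''\subset E^{(1)}$. As $E^{(1)}$ is Nori-finite, the candidate destabilizing $E''$ may be taken of vanishing numerical class, and then $p_{F^*E''}<P_0$ if and only if $p_{E''}<P_0$; hence $M$ is $\chi$-stable as a module with connection exactly when $E^{(1)}$ is $\chi$-stable as a bundle. Invoking the downward part of (2) then yields that $E^{(0)}=M$ is $\chi$-stable as a bundle, which is (3).
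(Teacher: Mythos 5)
Your proof follows the paper's own argument in all essentials: (1) is deduced from Theorem~\ref{thm:fin_tan}; (2) combines the upward implication (the $F^*$-pullback of a destabilizing subsheaf of $E^{(i+1)}$ destabilizes $E^{(i)}$) with the eventual periodicity of $\{E^{(i)}\}$ obtained from the finiteness of $\cM(k)$ and Lemma~\ref{lem:isomvect}; and (3) reduces $\chi$-stability of $(E^{(0)},\nabla)$ to that of $E^{(1)}$, then invokes (2) --- the paper states the latter two steps in a single line each, so you have essentially supplied the details it leaves implicit. The one soft spot is your repeated claim that a saturated subsheaf $W$ with $p_W=P_0$ automatically has vanishing numerical Chern classes ``by Riemann--Roch'': Riemann--Roch gives only the converse implication, since the Hilbert polynomial does not determine all numerical Chern classes. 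The claim is nevertheless true in your setting because $W$ sits inside the Nori-finite bundle $E^{(i+1)}$ with the same reduced Hilbert polynomial: pulling back to a finite \'etale trivializing cover, a saturated slope-$0$ subsheaf of $\sO_Y^{\oplus r}$ is a trivial direct summand, and numerical triviality of Chern classes descends along finite \'etale covers. With that small repair the argument is complete and coincides with the paper's.
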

\begin{proof}
(1) is an immediate consequence of Theorem~\ref{thm:fin_tan}, as the statement may be checked over the finite \'etale cover on which the $E^{(i)}$ become trivial.

To see (2), one may use the periodicity of the sequence $\{E^{(i)}\},$ which we saw in the proof of Theorem \ref{thm:fin_tan},
together with the fact that $E^{(i)}$ $\chi$-stable implies $E^{(i+1)}$ $\chi$-stable.

Finally for (3), $\chi$-stability of $M$ as a module with integrable connection, is equivalent to $\chi$-stability of $E^{(1)}$ as a vector bundle. So (3) follows from (2).
\end{proof}

\begin{remark} \label{rmk:BK} 
Corollary~\ref{cor:stab} (1)  and the part concerning the isotriviality  of the bundle $E^{(0)}$ in Theorem~\ref{thm:fin_tan} 
are proven in \cite[Prop.~2.5]{BK08}. There only boundedness is used, not the existence of a coarse moduli space defined over the finite field. 
The latter argument seems essential here, and 
the stronger statement in Theorem~\ref{thm:fin_tan} is used in 
Corollary~\ref{cor:stab}  to conclude stability of $E^{(0)}$ in (2) and (3).
\end{remark}

\begin{para} We still assume $k=\F_{p^a}$. 
Corollary~\ref{cor:stab} enables one to define the forgetful  functor 
\ga{1}{
\frak{forg}: 
\sD(X/k)
 \to  \sN(X); \ 
M\mapsto E^{(0)}, \notag}
which is a tensor functor compatible with the Tannakian structures on both sides.  Here $\sD(X/k)$ is 
the category of  $\sO_X$-coherent  $\sD_X$-modules.   For $x \in X(k),$ we denote by 
\ga{2}{ \frak{forg}^*: \pi_1(\sN(X),x)\to \pi_1(\sD(X/k),x), \ \  \frak{forg}|_M^*:  G(E^{(0)},x)\to G(M,x) \notag}
the induced homomorphisms of Tannaka group schemes. 
\end{para}

\begin{thm} \label{thm:surj_p} The functor $\frak{forg}$ is fully faithful, and for $M$ in $\sD(X/k)$ it induces an equivalence 
$\langle M \rangle \iso \langle E^{(0)} \rangle.$ 

In particular, for $x \in X(k),$ the homomorphism $\frak{forg}^*$ is faithfully flat, and for any $M$ in $\sD(X/k),$ the homomorphism $\frak{forg}|_M^*$ is an isomorphism.
\end{thm}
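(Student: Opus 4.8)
The plan is to reduce everything to the full faithfulness of $\frak{forg}$, after which the remaining assertions follow formally from the Tannakian dictionary together with the finiteness of $G(M,x)$ supplied by Theorem~\ref{thm:fin_tan}. Throughout I write $V = \underline{\Hom}(M,N)$ for $M,N \in \sD(X/k)$, which is again an $\sO_X$-coherent $\sD_X$-module, and I use that $\frak{forg}$ is exact and tensor, so that it carries $\langle M\rangle$ into $\langle E^{(0)}\rangle$ and restricts to $\frak{forg}|_M : \langle M\rangle \to \langle E^{(0)}\rangle$, compatibly with the fibre functors at $x$.

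First I would prove full faithfulness. Faithfulness is immediate, since a $\sD_X$-linear map is determined by the underlying $\sO_X$-linear map. For fullness, I identify $\Hom_{\sD_X}(M,N) = H^0(X,V)^{\nabla}$ and $\Hom_{\sO_X}(E^{(0)}_M,E^{(0)}_N)=H^0(X,V)$, so that fullness becomes the assertion that every global section of the underlying bundle of $V$ is horizontal, i.e.\ $H^0(X,V)=H^0(X,V)^{\nabla}$. To prove this I apply Theorem~\ref{thm:fin_tan} to $V$: the finiteness of its Tannaka group yields a finite \'etale cover $h : Y \to X$ with $h^*V$ trivial as an $\sO_Y$-coherent $\sD_Y$-module, say $h^*V \cong \sO_Y^{r}$. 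Given $s \in H^0(X,V)$, its pullback $h^*s$ lies in $H^0(Y,\sO_Y^{r})$ and is therefore horizontal, since on a trivial $\sD_Y$-module every global section is killed by $d$. As $h$ is finite \'etale, $h^*\Omega^1_X \iso \Omega^1_Y$ and $h^*$ is injective on global sections, so $\nabla_X(s)$ vanishes as well. Hence $s$ is horizontal, which proves fullness.

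Granting full faithfulness, I would analyse $\frak{forg}|_M^*: G(E^{(0)},x) \to G(M,x)$. That it is a closed immersion is formal: every object of $\langle E^{(0)}\rangle$ is a subquotient of a tensor construction in $E^{(0)}$, and such a construction is $\frak{forg}$ applied to the same construction in $M$; thus every object of $\langle E^{(0)}\rangle$ is a subquotient of an object in the image, which by the Tannakian dictionary is exactly the closed-immersion property. For surjectivity I use finiteness crucially. By Theorem~\ref{thm:fin_tan}, $G(M,x)$ is finite, so its left regular representation is a genuine object $T \in \langle M\rangle$ with $\omega(T)=\sO(G(M,x))$. Full faithfulness gives $\omega(T)^{G(M,x)} = \omega(T)^{G(E^{(0)},x)}$, where on the right $G(E^{(0)},x)$ acts through $\frak{forg}|_M^*$. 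The left-hand side is one-dimensional, while the right-hand side has dimension equal to the index of the image of $\frak{forg}|_M^*$ in $G(M,x)$; hence that index is $1$ and $\frak{forg}|_M^*$ is faithfully flat. Being simultaneously a closed immersion, it is an isomorphism, and so $\frak{forg}|_M : \langle M\rangle \iso \langle E^{(0)}\rangle$ is an equivalence.

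Finally, the faithfully flatness of $\frak{forg}^*: \pi_1(\sN(X),x)\to \pi_1(\sD(X/k),x)$ follows by checking that $\frak{forg}$ is fully faithful (done) and that its essential image is closed under subobjects: a subobject $W\subset E^{(0)}$ in $\sN(X)$ lies in $\langle E^{(0)}\rangle$, hence by the equivalence just established equals $\frak{forg}$ of a subobject $W' \subset M$ in $\langle M\rangle$, so $W$ is in the image. I expect the genuine obstacle to be the fullness step, i.e.\ forcing an arbitrary $\sO_X$-linear map of underlying bundles to be horizontal; everything there hinges on the finite \'etale trivializing cover from Theorem~\ref{thm:fin_tan}. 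The group-theoretic part is then soft, the only real input being the finiteness of $G(M,x)$, which is what makes the regular-representation argument for surjectivity legitimate.
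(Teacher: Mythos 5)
Your argument is correct, and its first half coincides with the paper's: full faithfulness is reduced to showing that a global section of the underlying bundle is automatically $\sD_X$-flat, which is checked after pulling back along the finite \'etale trivializing cover supplied by Theorem~\ref{thm:fin_tan} and descending by injectivity of $h^*$ on global sections. One small imprecision there: in characteristic $p$ the identification $\Hom_{\sD_X}(M,N)=H^0(X,V)^{\nabla}$ is not the right one, since a $\sD_X$-linear map must commute with all the divided-power operators, not just with $\nabla$; but your method actually proves the stronger statement directly (the pullback of a section is a constant, hence killed by every differential operator without constant term, and each ``defect'' descends to zero), so this is cosmetic. Where you genuinely diverge from the paper is in the second half. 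The paper establishes the equivalence $\langle M\rangle \iso \langle E^{(0)}\rangle$ geometrically, by showing that any subbundle $E'\hookrightarrow E^{(0)}$ in $\sN(X)$ is automatically a $\sD_X$-submodule (again an \'etale-local check on the trivializing cover), and then invokes \cite[Prop.~2.21]{DM82}. You instead argue purely Tannakianly: the closed-immersion property of $\frak{forg}|_M^*$ is formal, and its surjectivity follows from full faithfulness applied to the regular representation of the finite group scheme $G(M,x)$, comparing $\dim \sO(G)^{G}=1$ with $\dim\sO(G)^{H}=[G:H]$. Both routes rest on the same inputs (Theorem~\ref{thm:fin_tan} plus full faithfulness); the paper's version has the advantage of isolating the statement ``Nori subbundles of $E^{(0)}$ are $\sD_X$-stable,'' which is the form in which the result is reused in the proof of Theorem~\ref{thm:surj_0}, while your version is softer and makes transparent exactly where the finiteness of $G(M,x)$ enters. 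Your argument still delivers that subbundle statement as a corollary of the equivalence, so nothing downstream is lost.
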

\begin{proof}
The full faithfulness of $\frak{forg}$ is equivalent to the  surjectivity  of the  induced $k$-linear map $\Hom_{\sD_X}(\sO_X,M)\hookrightarrow H^0(X, E^{(0)}).$
That is we have to show any global section of $H^0(X, E^{(0)})$ gives rise to a map $\sO_X\hookrightarrow E^{(0)}$  of $\sD_X$-modules. This can be checked \'etale locally, so the statement follows from 
Theorem \ref{thm:fin_tan}. To show that $\langle M \rangle \iso \langle E^{(0)} \rangle,$ it suffices to show that for any $M$ in $\sD(X/k),$ a subbundle 
$E' \hookrightarrow  E^{(0)}$ in $\sN(X)$  
is a $\sD_X$-submodule. Since this statement is \'etale local, we may assume that $M,$ and hence also $E',$ is trivial, when the result follows from the full faithfulness 
proved above.

It follows that $\frak{forg}|_M^*$ is an isomorphism, and the faithful flatness of $\frak{forg}^*$ follows from \cite[Prop.~2.21]{DM82}.
\end{proof}

\section{Integrable connections in characteristic $0$ which satisfy $\frak{D}$. } \label{sec:0}
\begin{para} In this section, we derive the consequences in characteristic $0$ of the previous section.  Let $X$ be a smooth,  geometrically connected  scheme of finite type  defined over a field $k$ of characteristic $0,$ and equipped with an ample line bundle $\sO_X(1)$. 

The category of $\sO_X$-coherent $\sD_{X/k}$-modules,  is equivalent to the category of vector bundles with integrable connections $MIC(X/k)$, 
which is a $k$-linear Tannakian category, neutralized by taking the fibre of the underlying vector bundle at any point $x \in X(k)$ (if one exists).
\end{para}

\begin{definition} \label{defn:model}
Let $M=(E,\nabla)\in MIC(X/k)$.  Let $R \hookrightarrow k$ be a ring of finite type over $\Z$.  A model $(X_R, \sO_{X_R}(1), M_R)$ of $(X, \sO_X(1), M)$ over $R$ 
is a smooth, projective $R$-scheme $X_R$ with geometrically connected fibres, equipped with an ample line bundle $\sO_{X_R}(1),$ together with a 
vector bundle with an integrable connection $M_R$ relative to $R,$ and an isomorphism of $(X_R,\sO_{X_R}(1), M_R)\otimes_Rk$ with $(X,\sO_X(1),M).$
\end{definition}
Models always exist over some finitely generated $\Z$-algebra $R$ (see \cite[IV, \S 8]{EGA}).
We fix a model $(X_R, \sO_{X_R}(1), M_R)$ of $(X, \sO_X(1), M).$ 
For $x \in X(k),$ we denote by $G(M,x)$ the Tannaka group of $\langle M\rangle$, the full subcategory of $MIC(X/k)$ spanned by $M.$

\begin{para} \label{defn:pd}
Recall the conditions $\frak{P}$ and $\frak{D}$ from the introduction. 
We define the full Tannakian subcategories 
$$MIC^{\frak{f}}(X/k)\subset MIC^{\frak{D}}(X/k)\subset MIC^{\frak{P}}(X/k)\subset MIC(X/k)$$
 of objects 
 which are finite for $^{\frak{f}}$ (that is they become trivial over a finite \'etale cover of $X$), 
 which verify $\frak{D}$ for $^{\frak{D}}$,
 which verify $\frak{P}$ for $^{\frak{P}}$.
Clearly, the conditions $\frak{D}, \ \frak{P}$  do not depend on the $R$ chosen in Definition~\ref{defn:model}.  All these categories are  Tannakian subcategories of $MIC(X/k)$. 
Grothendieck's $p$-curvature conjecture predicts that 
$$MIC^{\frak{f}}(X/k) = MIC^{\frak{D}}(X/k) = MIC^{\frak{P}}(X/k) \subset MIC(X/k).$$

For the remainder of this subsection we assume that $X$ is projective.
\end{para}

\begin{prop} \label{prop:ss}
If $M=(E, \nabla)\in MIC^{\frak{D}}(X/k)$, then $E$ is $\chi$-semistable with vanishing numerical Chern classes. If $M$ is irreducible, then $E$ is $\chi$-stable.
\end{prop}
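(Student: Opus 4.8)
The plan is to descend everything to characteristic $p$ via a model and the hypothesis $\frak{D}$, apply Corollary~\ref{cor:stab} on the special fibres, and then transfer the (semi)stability back to the generic fibre by a spreading-out argument. Fix a model $(X_R,\sO_{X_R}(1),M_R)$ of $(X,\sO_X(1),M)$ over a finitely generated $\Z$-subalgebra $R\subset k$, and let $U\hookrightarrow\Spec R$ be the dense open provided by $\frak{D}$, so that for every closed point $s\in U$ the reduction $M_s=(E_s,\nabla_s)$ underlies an $\sO_{X_s}$-coherent $\sD_{X_s}$-module over the finite residue field $k(s)$, with $E_s$ its underlying bundle $E^{(0)}$. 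Since $R$ is a domain, $\Spec R$ is irreducible with generic point $\eta$, and $E=E_R\otimes_R k$ is a pullback of $E_\eta$; we will freely enlarge $R$ (shrinking $\Spec R$ to a localization, which keeps $\eta$ and the density of $U$) whenever finitely much data needs to be defined over $R$.

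I first treat the Chern classes and semistability. By Corollary~\ref{cor:stab}(1), for every closed point $s\in U$ the bundle $E_s$ is Nori-finite, hence strongly $\chi$-semistable with vanishing numerical Chern classes. The numerical Chern classes are constant in the flat family $E_R/\Spec R$, so their vanishing for some $E_s$ forces vanishing for $E_\eta$ and hence for $E$. For semistability, suppose $E$ were not $\chi$-semistable. The maximal destabilizing subsheaf $F\subsetneq E$ from the Harder--Narasimhan filtration is canonical, hence defined over $k$, and satisfies $p_F(n)>p_E(n)$ for $n$ large. After enlarging $R$ we may assume $F$ descends to a subsheaf $F_R\subset E_R$ that is flat over an open $V\subset\Spec R$, so that restriction to fibres preserves Hilbert polynomials. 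For a closed point $s\in V\cap U$ we then get $F_s\subset E_s$ with $p_{F_s}(n)=p_F(n)>p_E(n)=p_{E_s}(n)$ for $n$ large, contradicting the $\chi$-semistability of $E_s$. Hence $E$ is $\chi$-semistable. (Alternatively one invokes the openness, due to Maruyama, of the semistable locus in $\Spec R$: its complement is then a proper closed subset, so it avoids $\eta$.)

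Now assume $M$ is irreducible; I claim $E$ is $\chi$-stable. As a module with integrable connection, $M$ has no proper nonzero $\nabla$-stable subsheaf, so it is vacuously $\chi$-stable in $MIC(X/k)$. Since $\chi$-stability is an open condition in flat families, there is a dense open $U'\subset\Spec R$ over which the relative connection $M_R$ has geometrically $\chi$-stable fibres. For a closed point $s\in U\cap U'$ the reduction $M_s$ is therefore $\chi$-stable as a module with integrable connection and, by $\frak{D}$, underlies a $\sD_{X_s}$-module whose underlying connection is exactly $\nabla_s$. Corollary~\ref{cor:stab}(3) then gives that $E_s$ is $\chi$-stable as a vector bundle. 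Finally, the (geometrically) stable locus for the family $E_R/\Spec R$ is open and, containing such a closed point $s$, is nonempty, hence dense in the irreducible scheme $\Spec R$, hence contains $\eta$; as geometric stability is preserved under the field extension $k(\eta)\subset k$, the bundle $E$ is $\chi$-stable.

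The main obstacle is the transfer of stability across characteristics in the second part: one must correctly match the characteristic-$0$ stability of the connection $M$ with the characteristic-$p$ notion fed into Corollary~\ref{cor:stab}(3), and then handle the field-of-definition and spreading-out bookkeeping---descent of the canonical Harder--Narasimhan and socle data to $R$, constancy of Hilbert polynomials under specialization, openness of the (semi)stable loci, and the passage from geometric stability at $\eta$ to stability over $k$. By contrast, the Chern class vanishing and semistability follow quite directly from Corollary~\ref{cor:stab}(1) once the model is in place, so the real work is concentrated in the irreducible/stable case.
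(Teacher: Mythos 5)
Your proposal is correct and follows essentially the same route as the paper: spread out a putative destabilizing subsheaf to the fibres and contradict Corollary~\ref{cor:stab}(1) for semistability and the Chern classes, and for the irreducible case combine openness of $\chi$-stability for the family of connections with Corollary~\ref{cor:stab}(3) before passing back to the generic fibre. The paper's proof is just a terser version of the same argument, leaving the spreading-out and openness bookkeeping implicit.
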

\begin{proof}
A destabilizing subsheaf of $E$ would destabilize $E_s=E_s^{(0)}$ for all closed points of some non-empty open in $\Spec R,$  which would contradict Corollary~\ref{cor:stab} (1).  This proves the first statement. As for the second one,  by definition, $M$ is irreducible if and only if it is $\chi$-stable in $MIC(X/k).$ By openness of stability, $M_s$ is $\chi$-stable for all closed points of some non-empty open in $\Spec R$  (\cite[Thm.~1.1]{Lan14}). Thus $E_s^{(0)}$ is $\chi$-stable by Corollary~\ref{cor:stab} (3), and so $E$ is stable.
\end{proof}

\begin{para} 
Let $\sS(X)$ denote the category of semistable vector bundles $E$ on $X$, with vanishing numerical Chern classes. This is a Tannakian category, and taking the fibre at $x \in X(k)$ yields a fibre functor.
Proposition~\ref{prop:ss} enables us to define the forgetful functor 

\ga{1}{
{\rm forg}: 
MIC^{\frak{D}}(X/k)
 \to \sS(X), \ 
M= (E,\nabla)\mapsto E , \notag}
which is a tensor functor compatible with the Tannakian structures on both sides.  
For $x \in X(k),$ we denote by 
\ga{2}{{ \rm forg}^*: \pi_1(\sS(X),x)\to \pi_1(MIC(X/k),x), \ \  {\rm forg}|_M^*:  G(E,x)\to G(M,x) \notag}
the induced homomorphisms of Tannaka group schemes. 
\end{para}
\begin{thm} \label{thm:surj_0}
The functor ${\rm forg}$ is fully faithful. 
The homomorphism ${\rm  forg}|_M^*$ is an isomorphism and  the homomorphism ${\rm forg}^*$ is faithfully flat.  
\end{thm}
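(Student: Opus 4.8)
The plan is to mirror the proof of Theorem~\ref{thm:surj_p}, reducing every assertion in characteristic $0$ to its characteristic $p$ counterpart by spreading out over a model and specializing at the closed points of $\Spec R$ where $\frak{D}$ holds.

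First I would reduce full faithfulness to a statement about horizontal sections. Since $MIC^{\frak{D}}(X/k)$ is Tannakian and closed under internal $\underline{\Hom}$, for $M, N \in MIC^{\frak{D}}(X/k)$ the object $\underline{\Hom}(M,N)$ again lies in $MIC^{\frak{D}}(X/k)$, and the map $\Hom_{MIC}(M,N) \to \Hom_{\sO_X}(E_M, E_N)$ is identified with $H^0_{\nabla}\big(X, \underline{\Hom}(M,N)\big) \to H^0\big(X, E_{\underline{\Hom}(M,N)}\big)$. Thus, injectivity being clear, full faithfulness is equivalent to the assertion that for every $M = (E,\nabla) \in MIC^{\frak{D}}(X/k)$ every global section of $E$ is horizontal, i.e.\ $H^0_{\nabla}(X,M) = H^0(X,E)$.

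Fix a model $(X_R, \sO_{X_R}(1), M_R)$ as in Definition~\ref{defn:model} with $R \subset k$ a domain of finite type over $\Z$, and let $U \hookrightarrow \Spec R$ be a dense open over which $\frak{D}$ holds. After shrinking $\Spec R$ I may assume, by generic freeness, that $H^0(X_R, E_R)$ and $H^0(X_R, E_R \otimes \Omega^1_{X_R/R})$ are free $R$-modules commuting with all base change. A section $\sigma \in H^0(X,E)$ then spreads out to $\sigma_R \in H^0(X_R, E_R)$, with $\nabla_R \sigma_R \in H^0(X_R, E_R \otimes \Omega^1_{X_R/R})$. For each closed point $s \in U$, the reduction $M_s$ underlies a $\sD_{X_s}$-module with underlying bundle $E_s = E_s^{(0)}$, and by Theorem~\ref{thm:surj_p} the map $\Hom_{\sD_{X_s}}(\sO_{X_s}, M_s) \to H^0(X_s, E_s)$ is surjective; hence $\sigma_s$ is $\sD_{X_s}$-horizontal and a fortiori $\nabla_s \sigma_s = 0$. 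So $\nabla_R \sigma_R$ reduces to $0$ at every closed point of $U$. Since $R$ is a Jacobson domain and the closed points of the dense open $U$ are dense in $\Spec R$, the intersection of the corresponding maximal ideals $\frak{m}_s$ is $0$; as the ambient module is free, this forces $\nabla_R \sigma_R = 0$, and base change to $k$ gives $\nabla \sigma = 0$. This proves full faithfulness.

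The same mechanism gives the equivalence $\langle M\rangle \iso \langle E\rangle$. By \cite[Prop.~2.21]{DM82} it suffices to show that every subobject $E' \hookrightarrow E$ in $\sS(X)$ is $\nabla$-stable, so that it underlies a subobject of $M$ in $MIC^{\frak{D}}(X/k)$. Spreading $E'$ to a saturated subbundle $E'_R \subset E_R$ over a dense open, consider the $\sO$-linear second fundamental form $\theta \colon E'_R \to (E_R/E'_R) \otimes \Omega^1_{X_R/R}$ induced by $\nabla_R$, a global section of a coherent sheaf whose $H^0$ I again arrange to be free after shrinking $\Spec R$. For $s \in U$, $E'_s$ is a subobject of the Nori-finite bundle $E_s^{(0)}$ in $\sN(X_s)$ (using Corollary~\ref{cor:stab}), hence a $\sD_{X_s}$-submodule by Theorem~\ref{thm:surj_p}, so $\theta_s = 0$. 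The Jacobson density argument again yields $\theta = 0$ over $k$, i.e.\ $E'$ is $\nabla$-stable. Thus ${\rm forg}$ restricts to an equivalence $\langle M\rangle \iso \langle E\rangle$, so ${\rm forg}|_M^* \colon G(E,x) \to G(M,x)$ is an isomorphism; and full faithfulness together with stability under subobjects shows via \cite[Prop.~2.21]{DM82} that ${\rm forg}^*$ is faithfully flat. The main obstacle is the spreading-out bookkeeping: one must choose the model so that formation of all the relevant $H^0$'s commutes with specialization at the closed points of $U$, and then invoke the Jacobson property of $R$ to pass from vanishing at a dense set of closed points back to vanishing over $k$. Everything genuinely new is imported from the characteristic $p$ Theorem~\ref{thm:surj_p}.
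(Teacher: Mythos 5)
Your proposal is correct and follows essentially the same route as the paper: reduce full faithfulness to the horizontality of global sections, and reduce $\nabla$-stability of subobjects in $\sS(X)$ to the characteristic~$p$ statements of Theorem~\ref{thm:surj_p} by specializing at a dense set of closed points of $\Spec R$, then conclude via \cite[Prop.~2.21]{DM82}. The paper leaves the spreading-out and density bookkeeping (generic freeness, base change for $H^0$, the Jacobson property, the second fundamental form) implicit, whereas you spell it out; the underlying argument is the same.
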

\begin{proof}
We argue as in the proof of Theorem~\ref{thm:surj_p}. 
The full faithfulness of ${\rm  forg}$ is equivalent to the surjectivity of the map $H^0_{dR}(X, M)\hookrightarrow H^0(X, E)$ induced by ${\rm forg},$ 
which follows 
from the full faithfulness in Theorem~\ref{thm:surj_p}, by taking the fibres of sections at closed points $s \in \Spec R.$

Next let $M=(E,\nabla)$ be in $MIC^{\frak{D}}(X/k)$ and $E' \hookrightarrow E$  any  subvector bundle in $\sS(X).$ 
Then for $s$ in a non-empty open in $ {\rm Spec} \ R,$ $E'_s$ is semistable with vanishing numerical Chern classes and $E_s$ is trivialized by a finite \'etale cover. 
Thus  $E'_s$ is  trivialized by a finite \'etale cover
 as well, and so lies in $\sN(X_s).$ It follows by Theorem~\ref{thm:surj_p}, that $E'_s \hookrightarrow E_s$  is $\nabla$-stable, 
and hence $E'$ is $\nabla$-stable.

Finally,  this  implies that ${\rm  forg}|_M^*$ 
is an isomorphism and ${\rm forg}^*$ is faithfully flat (\cite[Prop.~2.21]{DM82}).   
\end{proof}

\begin{corollary} If $M = (E,\nabla)$ is in $MIC^{\frak{D}}(X/k)$ and $E$ is in $\sN(X),$ then $M\in MIC^{\frak{f}}(X/k)$.
\end{corollary}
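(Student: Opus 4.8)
The plan is to transfer the finiteness of the Tannaka group from the underlying bundle $E$ to the connection $M$ by means of Theorem~\ref{thm:surj_0}, and then to upgrade the resulting abstract finiteness to an honest finite \'etale trivialization.

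First I would record that $G(E,x)$ is finite. Since $E\in \sN(X)$, the subcategory $\langle E\rangle$ consists of Nori-finite bundles, so its Tannaka group computed in $\sS(X)$ agrees with the one computed in $\sN(X)$; as we are in characteristic $0$, Nori finiteness means there is a finite \'etale $h\colon Y\to X$ with $h^*E$ trivial as a bundle, whence $G(E,x)$ is a quotient of the (finite) Galois group of $Y/X$ and is therefore finite. By Cartier's theorem every finite group scheme over a field of characteristic $0$ is \'etale, so $G(E,x)$ is finite \'etale. Now Theorem~\ref{thm:surj_0} gives an isomorphism ${\rm forg}|_M^*\colon G(E,x)\iso G(M,x)$, so $G(M,x)$ is finite \'etale as well and $\langle M\rangle\iso\langle E\rangle$.

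The remaining point, which is really the only non-formal step, is to deduce from the finiteness of $G(M,x)$ that $M\in MIC^{\frak f}(X/k)$, i.e.\ that $M$ is trivialized by some finite \'etale cover. I would argue this directly, avoiding the abstract comparison between the de Rham and \'etale fundamental groups. Keep the cover $h\colon Y\to X$ trivializing $E$; after a harmless finite extension of $k$ we may assume $Y$ is smooth, projective, geometrically connected and possesses a rational point. Pulling back, $h^*M=(h^*E,h^*\nabla)$ again satisfies $\frak D$, since $\frak D$ is preserved under pullback along (the reductions of) the \'etale map $h$, and its underlying bundle $h^*E$ is trivial. Applying the full faithfulness of ${\rm forg}$ from Theorem~\ref{thm:surj_0} on $Y$ yields
$$\Hom_{MIC(Y/k)}(\sO_Y,h^*M)\ \iso\ H^0(Y,h^*E)\ =\ k^{\,r},\qquad r=\rk\, M,$$
so $h^*M$ admits $r$ linearly independent horizontal sections. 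As $h^*E$ is trivial of rank $r$ with $H^0(Y,h^*E)=k^r$, these sections form a frame, giving an isomorphism $(\sO_Y)^r\iso h^*M$ in $MIC(Y/k)$. Hence $h^*M$ is trivial and $M\in MIC^{\frak f}(X/k)$.

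I expect the main (indeed the only) obstacle to be this last step: witnessing the abstract finiteness of $G(M,x)$ by a genuine finite \'etale trivializing cover. The pullback argument dispatches it cleanly, provided one checks that $\frak D$ ascends along $h$ and that the space of horizontal sections of $h^*M$ has dimension exactly $\rk\, M$; everything else is a formal consequence of the isomorphism ${\rm forg}|_M^*$ supplied by Theorem~\ref{thm:surj_0}.
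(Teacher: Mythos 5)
Your argument is correct, and its first half \emph{is} the paper's entire proof: the paper just notes that $G(E,x)$ is finite \'etale and that ${\rm forg}|_M^*$ is an isomorphism, then stops, implicitly invoking the standard Tannakian fact that an object of $MIC(X/k)$ whose monodromy group scheme is finite \'etale is trivialized on the associated finite \'etale torsor. Where you genuinely diverge is in making that last implication explicit by a different mechanism: rather than appealing to the torsor attached to $G(M,x)$, you pull $M$ back along the cover $h\colon Y\to X$ trivializing $E$, observe that $\frak{D}$ ascends along the \'etale map $h$, and reuse the full faithfulness of ${\rm forg}$ from Theorem~\ref{thm:surj_0} on $Y$ to produce $r=\rk\, M$ independent horizontal sections of $h^*M$, which form a frame precisely because $h^*E$ is trivial and $H^0(Y,h^*E)=k^r$. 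This buys concreteness --- the trivializing cover for $M$ is exhibited as the one already in hand for $E$, with no torsor construction needed --- at the mild cost of checking that $\frak{D}$ is stable under \'etale pullback and arranging $Y$ to be geometrically connected (possibly after a finite extension of $k$, which is harmless since such an extension is itself finite \'etale in characteristic $0$) so that Theorem~\ref{thm:surj_0} applies on $Y$. Both routes are sound; yours fills in a step the paper leaves to the reader.
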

\begin{proof} In this case $G(E,x)$ is a finite (\'etale) group scheme, so the corollary follows from the fact that ${\rm forg}|_M^*$ is an isomorphism.
\end{proof}

\begin{thm} \label{thm:katz}
Let  $X$ be a smooth projective connected variety over $\C$, and $M$ a polarizable $\Z$-variation of Hodge structure, such that $M\in MIC^{\frak{D}}(X/\C)$.  Then $M\in MIC^{\frak{f}}(X/\C)$.
\end{thm}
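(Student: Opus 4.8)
The plan is to use the hypothesis $\frak{D}$ only through Proposition~\ref{prop:ss}, which tells us that the underlying de Rham bundle $E$ of $M=(E,\nabla)$ is $\chi$-semistable with vanishing numerical Chern classes. Since a polarizable $\Z$-variation of Hodge structure is semisimple, I would first decompose $M$ and assume it irreducible, so that $E$ is in fact $\chi$-stable of degree $0$. The Hodge filtration $F^\bullet$ on $E$ satisfies Griffiths transversality, and its associated graded $(\gr_F E,\theta)$, with Higgs field $\theta=\gr_F\nabla\colon \gr_F^p E\to \gr_F^{p-1}E\otimes\Omega^1_X$ the Kodaira--Spencer map, is the system of Hodge bundles attached to $M$; by Simpson's theory this Higgs bundle is polystable of degree $0$. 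The goal is to prove that $\theta=0$, i.e.\ that $M$ is unitary: once this is known, the integral and polarized structure will force finite monodromy.

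The main step, and the one I expect to be the crux, is to deduce $\theta=0$ from the semistability of the \emph{de Rham} bundle $E$ (as opposed to the Higgs bundle). Here I would run the standard Hitchin--Simpson/Griffiths curvature argument along the filtration. Let $F^{p_{\max}}$ be the deepest nonzero step of $F^\bullet$; it is a holomorphic subbundle of $E$, so $\chi$-semistability of $E$, which implies slope-semistability, gives $\deg F^{p_{\max}}\le 0$. On the other hand, the Hodge metric furnished by the polarization endows the top Hodge bundle with semipositive curvature, whence $\deg F^{p_{\max}}\ge 0$, with equality forcing the second fundamental form — that is, the only component of $\theta$ emanating from $\gr_F^{p_{\max}}$ — to vanish. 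Combining the two inequalities yields $\deg F^{p_{\max}}=0$ together with the vanishing of $\theta$ on the top piece, so that $F^{p_{\max}}$ is $\nabla$-flat. As $M$ is polarizable and hence semisimple, $F^{p_{\max}}$ splits off as a sub-variation of pure Hodge type, and I would then induct on the rank: the complementary quotient is again the de Rham bundle of a polarizable variation, $\chi$-semistable of degree $0$, and the same argument applies. This yields $\theta=0$ on all of $M$.

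With $\theta=0$, the Simpson correspondence — equivalently Narasimhan--Seshadri, since $E$ is now a polystable degree-$0$ bundle carrying a flat unitary structure — identifies $M$ with a unitary representation $\rho\colon \pi_1(X,x)\to \GL_n(\C)$. Because $M$ is a $\Z$-variation of Hodge structure now concentrated in a single Hodge type, the polarization is a flat, $\rho$-invariant, definite hermitian form, while $\rho$ preserves the underlying $\Z$-lattice; thus $\rho(\pi_1)$ lies in the intersection of the discrete group $\GL_n(\Z)$ with the compact unitary group of the polarization, and is therefore finite. Equivalently, $M$ becomes trivial on the finite \'etale cover trivializing $\rho$, so $M\in MIC^{\frak{f}}(X/\C)$. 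The delicate point throughout is the curvature/degree computation forcing $\theta=0$; everything after it is formal, and everything before it is supplied by Proposition~\ref{prop:ss}.
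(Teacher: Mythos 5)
Your argument is correct and reaches the same two milestones as the paper's proof: first $\theta=0$, then finiteness from unitarity combined with the $\Z$-structure. The implementation of the middle step is genuinely different, though. The paper first restricts to a curve through a base point (so that $\chi$- and slope-stability coincide and \cite{Sim90} applies directly), reduces to $M$ irreducible, and then examines the \emph{lowest} Hodge piece $\sH^{n-a,a}$ with $a$ maximal: it is a Higgs subbundle of $(\gr^F E,\theta)$, hence of degree $\le 0$ by semistability of the associated Higgs bundle, and simultaneously a quotient of the $\chi$-stable degree-$0$ bundle $E$, hence of degree $\ge 0$ with equality only if it exhausts $E$; this forces $E=\sH^{n-a,a}$ and $\theta=0$ in one stroke, with no induction. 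You work instead with the \emph{deepest} piece $F^{p_{\max}}$, a subbundle of $E$, obtaining $\deg F^{p_{\max}}\le 0$ from semistability of $E$ (Proposition~\ref{prop:ss}) and $\deg F^{p_{\max}}\ge 0$ from the Griffiths curvature computation for the Hodge metric, whose equality case kills the component of $\theta$ issuing from $F^{p_{\max}}$; you then split off this flat sub-variation by Deligne semisimplicity and induct on the rank. Your route buys a little robustness: it uses only semistability of $E$, so the reduction to irreducible $M$ is not actually needed for the $\theta=0$ step, whereas the paper's route is shorter because stability of $E$ replaces both the equality analysis and the induction. Your final step, that the monodromy lands in the intersection of $\GL_n(\Z)$ with a compact unitary group and is therefore finite, is precisely the argument of \cite[Prop.~4.2.1.3]{Kat72} that the paper invokes by reference.
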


\begin{proof} 
By the Lefschetz hyperplane theorem, and Bertini's theorem, we can choose $x \in X(\C)$ so that there exists a smooth projective curve $C \subset X,$ with $x \in C,$ and 
such that the map $\pi_1(C,x) \rightarrow \pi_1(X, x)$ is surjective. One checks immediately that 
$M|_C$ is in $MIC^{\frak{D}}(C/\C).$ Hence we may replace $X$ by $C,$ and assume that that $X$ has dimension $1$. 

Deligne's semi-simplicity theorem \cite[4.2]{Del71} over $\Q$, together with the fact that a  summand of a $\Q$-variation of Hodge structure definable over $\Z$ 
is itself definable over $\Z$,  implies that we may assume that $M$ is irreducible, that is stable. It follows by Proposition~\ref{prop:ss} that $E$ is stable.

We apply Hitchin-Simpson theory. 
The semistable Higgs bundle  $(V, \theta)$ associated to $M=(E,\nabla)$  is  $V=gr^F E=\oplus_{a=0}^n \sH^{n-a,a}$ with $\theta: \sH^{n-a,a}\to \omega_X\otimes \sH^{n-a-1, a+1}$ the Kodaira-Spencer map of $\nabla$ (\cite[Thm.~8]{Sim90}). Here $\omega_X$ is the sheaf of differential $1$-forms on $X$.
Choose $a$ as large as possible such that $\sH^{n-a,a} \neq 0.$ 
Then $(\sH^{n-a,a},0)\subset (V, \theta)$ is a Higgs subbundle  and therefore ${\rm deg} \ \sH^{n-a,a} \le 0$. 
On the other hand, by definition, one has the surjection $E\surj \sH^{n-a,a}.$ 
Since $E$ is stable, ${\rm deg}(E)=0$, and $\sH^{n-a,a} \neq 0,$ it follows that 
$E=\sH^{n-a,a}.$ In particular, $\theta \equiv 0,$ and we may apply Katz's argument  \cite[Prop.~4.2.1.3]{Kat72} to conclude that the monodromy of 
$(M,\nabla)$ is finite.  
\end{proof}

\section{Integrable connections in characteristic $0$ which satisfy $({\frak{D}}, p)$.}

\begin{para} We keep the assumptions of the previous section, so in particular $X$ is smooth, projective and geometrically connected over $k.$ 
We  assume that $X(k)$ is non-empty and we fix a point $x \in X(k).$ 
After increasing $R,$ we may assume that $x$ arises from a point $x_R \in X_R(R).$ For a point $s \in \Spec R,$ we denote by $x_s$ 
the image of $x_R$ in $X_R(k(s)).$

If $M\in MIC^{\frak{D}}(X/k),$ then for all closed points $s$ of some non-trivial open in $\Spec R$,  the restriction $M_s$ of a model has a finite \'etale Tannaka group $G_s : = G(M_s, x_s)$ (see Theorem~\ref{thm:fin_tan}),  which does not depend on the choice of $\sD_{X_s}$-module structure on $M_s,$ by Theorem \ref{thm:surj_p}.
We denote by  $|G_s|$ the order of the group scheme $G_s.$ That is, $|G_s|$ is the order of $G_{\bar s} : = G_s(\overline{k(s)}),$ where $\overline{ k(s)}$ is an algebraic closure of the residue field $k(s).$
The order $|G_s|$ does not depend on the rational point chosen, as by Tannaka theory  the isomorphism class of $G_{\bar s}$ does not depend on the choice of the  fibre functor. 
\end{para}

\begin{para}
The group $G_{\bar s}$ may be viewed as a quotient of the geometric \'etale fundamental group $\pi_1(X_{\bar s},x_{\bar s}):$ 
Let $\sO_{G_s}$ denote the Hopf algebra of $G_s.$ By \cite[\S 2]{Nor76}, the $k(s)$-representation $\sO_{G_s}$ corresponds via Tannaka duality to a $G_s$-torsor $P.$ 
Since the only $G_s$-invariant elements of $\sO_{G_s}$ are the constants, we have $H^0(P,\sO_P) = k(s),$ and so $H^0(P_{\overline{ k(s)}}, \sO_{P_{\overline{ k(s)}}}) = \overline{k(s)}.$ 
Hence $P$ is geometrically connected. In particular the automorphism group of $P_{\overline{ k(s)}}/X_{\overline{ k(s)}}$ must be equal to $G_{\bar s},$ and we obtain a surjective 
map $\pi_1(X_{\bar s},x_{\bar s}) \rightarrow G_{\bar s}.$
\end{para}

\begin{definition}
\label{defn:p} Let $MIC^{\frak{D},p}(X/k)\subset MIC^{\frak{D}}(X/k)$ denote the full subcategory of objects $M$ 
such that $|G_s|$ is prime to the characteristic of $k(s)$ for a dense set of closed points of some non-trivial open subset of $\Spec R.$ 
This category does not depend on the choice of model $(X_R, \sO_{X_R}(1), M_R).$ 
One has  inclusions of Tannakian categories 
$$MIC^{\frak{f}}(X/k)\subset MIC^{\frak{D},p}(X/k)
\subset MIC^{\frak{D}}(X/k)\subset MIC^{\frak{P}}(X/k)\subset MIC(X/k).$$
\end{definition}
We finish the paper with a proof of the following 
\begin{thm} \label{thm:Dp}
Let $X$ be a smooth projective geometrically connected   variety over a field $k$ of characteristic $0$, with a rational point $x$. Then 
$$ MIC^{\frak{f}}(X/k)=MIC^{\frak{D},p}(X/k).$$
Moreover, if $G = G(M,x)$ denotes the (finite) monodromy group of $M,$ then we have $G_{\bar s} = G$ for all closed points  $s$ is a non-empty Zariski open subset of $\Spec R.$
\end{thm}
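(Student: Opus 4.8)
The plan is to follow Andr\'e's strategy of applying Jordan's theorem \cite{Jor78} (as in \cite[7.1.3]{And04}, and in its equal-characteristic incarnation \cite[Thm.~5.1]{EL13}) in order to reduce to a situation where the known solvable cases of the $p$-curvature conjecture (\cite{Chu85}, \cite{Bos01}, \cite{And04}) apply; since $\frak{D}$ implies $\frak{P}$, once we produce a finite \'etale cover on which $M$ acquires solvable monodromy we can conclude isotriviality. The inclusion $MIC^{\frak{f}}(X/k)\subset MIC^{\frak{D},p}(X/k)$ is clear, so the content is the reverse inclusion: given $M$ of rank $n$ in $MIC^{\frak{D},p}(X/k)$ we must show the monodromy group $G=G(M,x)$ is finite. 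First I would fix an embedding $k\hookrightarrow\C$ and work with the geometric fundamental group $\pi_1:=\pi_1(X_{\bar\eta})$ of a geometric generic fibre, which in characteristic $0$ is canonically the profinite completion of the topological fundamental group and is independent of the algebraically closed base field; in particular it is topologically finitely generated. The last assertion of the theorem is then the easy part: once $M$ is trivialized by a finite \'etale Galois cover $h\colon Y\to X$ with group $G$, the same cover spreads out over $R$ and, for all $s$ in a non-empty open where $h$ stays \'etale and $|G|$ is prime to $\Char k(s)$, trivializes $M_s$ with the same group, giving $G_{\bar s}=G$.

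For the core argument, fix a closed point $s$ in the dense set where $G_s$ is finite \'etale (Theorem~\ref{thm:fin_tan}) of order prime to $p:=\Char k(s)$. The rank-$n$ representation $M_s$ realizes $G_{\bar s}$ as a finite subgroup of $\GL_n(\overline{k(s)})$ of order prime to $p$; such $p'$-subgroups lift to characteristic $0$, so Jordan's theorem applies and furnishes an abelian normal subgroup $A_s\triangleleft G_{\bar s}$ of index at most $J(n)$, where $J(n)$ depends only on $n$. Because $|G_{\bar s}|$ is prime to $p$, the surjection $\pi_1(X_{\bar s})\twoheadrightarrow G_{\bar s}$ factors through the maximal prime-to-$p$ quotient, which by Grothendieck's specialization theorem for the prime-to-$p$ fundamental group is itself a quotient of $\pi_1$. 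Composing, the finite quotient $G_{\bar s}/A_s$, of order at most $J(n)$, is realized as a quotient of the fixed group $\pi_1$.

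Since $\pi_1$ is topologically finitely generated it has only finitely many open normal subgroups of index at most $J(n)$. As $s$ ranges over the dense set of good points, the kernels of $\pi_1\twoheadrightarrow G_{\bar s}/A_s$ therefore take finitely many values, and, since a dense subset of an irreducible scheme cannot be covered by finitely many non-dense subsets, one such kernel $N$ occurs for a dense set $T$ of good $s$. Let $X'\to X$ be the finite \'etale Galois cover of degree at most $J(n)$ attached to $N$ (which I would first construct over $\bar k$ and then spread out over $R$ after shrinking $\Spec R$), and set $M'=M|_{X'}$. Pullback preserves condition $\frak{D}$ and the prime-to-$p$ condition, so $M'\in MIC^{\frak{D},p}(X'/k)$, and for every $s\in T$ the Tannaka group of $M'_s$ is the image of $N$, namely the abelian group $A_s$. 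Thus $M'$ has abelian mod-$p$ monodromy along the dense set $T$; a specialization (rigidity) argument then shows that $G'=G(M',x')$ is solvable, since any non-abelian finite quotient detected by $M'$ would persist in some $A_s$ with $s\in T$. Applying the known solvable case of the $p$-curvature conjecture to $M'$ (legitimate because $\frak{D}\Rightarrow\frak{P}$) gives that $M'$ is isotrivial, and descent along $X'\to X$ yields $M\in MIC^{\frak{f}}(X/k)$.

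The main obstacle is the passage, carried out only schematically in the last paragraph, from ``abelian Tannaka group $A_s$ for a dense set of $s$'' to ``solvable monodromy in characteristic $0$'': this requires comparing the characteristic-$0$ monodromy with its reductions with enough control on fields of definition---concretely, showing that the commutators of the monodromy reduce to the identity along $T$ and hence vanish, which rests on the monodromy group being defined over $\bar\Q$ and on the Zariski density of $T$ in $\Spec R$. A secondary delicate point is making the two uses of the prime-to-$p$ hypothesis uniform: it is precisely what allows Jordan's theorem to be applied to the mod-$p$ groups and what identifies $G_{\bar s}$ with a quotient of the characteristic-$0$ fundamental group through the prime-to-$p$ specialization, and without it neither step is available---matching the observation in \cite[Section~4]{EL13} that coprimality to $p$ is a necessary condition.
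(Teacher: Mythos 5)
Your skeleton (Jordan's theorem, specialization of the fundamental group, passage to a cover with abelian mod-$p$ Tannaka groups, then the solvable case of the $p$-curvature conjecture) is exactly the paper's, but the step you yourself flag as ``the main obstacle'' is a genuine gap, and the device you sketch for closing it does not work. There is no specialization map from the characteristic-$0$ monodromy group $G(M',x')$ --- which a priori is an arbitrary algebraic group, not a finite group defined over $\bar\Q$ --- to the finite groups $A_s$, so one cannot ``reduce the commutators along $T$ and conclude they vanish''; making such a comparison is precisely the content of the $p$-curvature conjecture itself. The paper circumvents this with a stability argument that your proposal is missing: an irreducible subquotient $M'$ of $M$ is $\chi$-stable in $MIC(X/k)$, hence $M'_s$ is $\chi$-stable for $s$ in a non-empty open (openness of stability), hence by Corollary~\ref{cor:stab}\,(3) the underlying bundle $E'^{(0)}_s$ is a $\chi$-stable, i.e.\ irreducible, object of the Tannakian category with abelian group $G_s$, and therefore has rank $1$. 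This forces $\rk M'=1$ directly in characteristic $0$, so $M$ is a successive extension of rank-$1$ objects and \cite[Cor.~4.3.2]{And04} applies --- no transfer of abelianness to the generic fibre is ever needed.

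Two secondary points. First, your proof of the supplementary assertion $G_{\bar s}=G$ is also incomplete: trivializing $M_s$ by the reduction of the $G$-cover only exhibits $G_{\bar s}$ as a quotient of $G$; to get equality you must argue that the corresponding representation of $G$ stays faithful modulo $s$ (e.g.\ via an integral structure and the fact that the reduction kernel is pro-$p$ while $|G|$ is prime to $p$). The paper instead argues by contradiction with an irreducible representation $\rho$ of $G$ admitting non-trivial $H$-invariants for a proper subgroup $H$, and uses base change for de Rham cohomology (or Theorem~\ref{thm:surj_0}) to show $H^0_{\dR}(X,V)\neq 0$. Second, your remark that the prime-to-$p$ finite subgroups of $\GL_n(\overline{k(s)})$ lift to characteristic $0$ before applying Jordan is correct and is a point the paper leaves implicit; likewise your choice of a single kernel $N$ occurring for a dense $T$ is a harmless variant of the paper's intersection of all the finitely many kernels.
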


\begin{proof} Take  $M\in MIC^{\frak{D},p}(X/k).$ Consider the dense  set of  closed points $s$ in $\Spec R$ 
such that $G_s$ is defined and $G_{\bar s}$ has order prime to the characteristic of $k(s)$. We denote by $x_{\bar k} \in X(\bar k)$ and $x_{\bar s} \in X(\overline{ k(s)})$ the geometric points 
induced by $x$ and $x_s$ respectively.

By Jordan's theorem \cite{Jor78},  there is a constant $c(r)$ depending only on $r = \rk \, M$ such that 
$G_{\bar s}$ contains a normal abelian subgroup $N_s$  of index at most $c(r)$. 
Thus,  the  surjective specialization homomorphism $\pi_1(X_{\bar k}, x_{\bar k})\surj \pi_1(X_{\bar s}, x_{\bar s})$,  composed with $\pi_1(X_{ \bar s}, x_{\bar s})
 \to G_{\bar s}/N_s$ defines, for each closed point $s,$ a finite quotient of 
 $\pi_1(X_{\bar k}, x_{\bar k})$  of order bounded above by $c(r)$. Since $\pi_1(X_{\bar k}, x_{\bar k})$   is topologically finitely generated, there are finitely many such quotients, and all such 
maps factor through some finite quotient of $\pi_1(X_{\bar k}, x_{\bar k}),$ 
which defines a Galois cover $h: Y\to X_{\bar k}.$ The map $h$ is defined over a finite extension $K$ of $k$, say $h_K: Y_K\to X_K,$ and we may assume that $x$ is the image of a point of $Y_K(K).$
Replacing $k$ by $K$, $X$ by $Y_K,$ 
and $M$ by its pullback to $Y_K,$   the new Tannaka groups $G_s$ are subgroups of the old ones, so that $M\in MIC^{\frak{D},p}(X/k).$
Thus we may assume that $G_s$ is abelian for $s$ in a dense set of closed points in ${\rm Spec} \ R$. 

If $M' = (E',\nabla)$ is an irreducible subquotient of $M$ in $MIC(X/k),$ then $M'$ is stable, and so $M'_s$ is stable for $s$ any closed point  in a non-empty open in $\Spec R,$ and in particular for all $s$ in a dense set of closed points of $\Spec R$ on which $G_s$ is abelian.
This implies that $M'_s$ has rank $1,$ and hence so does $M'$. 
 It follows that $M$ is a successive extension of rank $1$ objects in $MIC(X/k).$
We now apply Andr\'e's solution to Grothendieck's conjecture, for connections with solvable monodromy \cite[Cor.~4.3.2]{And04} to conclude that $M$ has finite monodromy.

We have $G_{\bar s} \subset G,$ and it remains to show that $G = G_{\bar s}$ for all $s$ in a non-empty Zariski open subset of $\Spec R.$
If not, there is a proper subgroup $H \subset G$ such that $G_{\bar s}$ is identified with $H$ for $s$ in a Zariski dense set $T.$ 
By the Tannakian formalism, there is an object $V$ in $\langle M \rangle$ corresponding to a non-trivial, irreducible representation $\rho$ of $G,$ such that $\rho$ admits non-trivial $H$-invariants. 
The latter condition implies that for $s$ in a Zariski dense subset of $T,$ $H^0_{\dR}(X_s, V_s) \neq 0,$ which implies that $H^0_{\dR}(X,V) \neq 0,$ either by
 base change for de Rham cohomology \cite[Thm.~8.0]{Kat70} or Theorem \ref{thm:surj_0}. This contradicts the irreducibility of $\rho.$
\end{proof}

\begin{para}  \label{ss:MvdP} Theorem \ref{thm:Dp} answers an analogue of a question of Matzat - van der Put  \cite[p.~51]{MP03} in the projective case. 
More precisely, in our terminology their question amounts to whether the following assertion holds: Let $k$ be a number field, $X \subset \mathbb A^1_k$ a Zariski open subset, and $M$ in $MIC^{\frak{D}}(X/k).$ Suppose that for almost all $s \in \Spec \sO_k,$ $M_s$ underlies a $\sD_{X_s}$-module which becomes trivial over a finite \'etale Galois covering with group $G_{\bar s} = G,$ a fixed group independent of $s.$ Then $M$ has monodromy group $G.$ When $X$ is replaced by a projective $k$-scheme, this is a particular case of Theorem~\ref{thm:Dp}. 

We remark that if $X$ is {\it not} projective, then the assertion of \cite[p.~51]{MP03} does not hold. Indeed, suppose that $X \hookrightarrow \mathbb A^1_k$ is open with $k$ a number field, and let 
$M = (E,\nabla)$ be a regular connection in $MIC(X/k)$ having finite, non-trivial monodromy. 
Then $M$ has vanishing $p$-curvatures, and so $E_s$ descends to a vector bundle $E_s^{(1)}$ on $X_s^{(1)}.$ 
As $X_s^{(1)}$ is open in  $\mathbb A^1_{k(s)},$ $E_s^{(1)}$ is necessarily a trivial bundle, and so $M_s$ is trivial as an object of $MIC(X_s/k(s)).$ In particular $M_s$ is obtained from a trivial 
$\sD_{X_s}$-module, and we may take $G_s = \{1\}$ for almost all $s.$

This example also shows that if one weakens the conclusion in \cite[p.~51]{MP03} to assert that $M$ has finite monodromy, then the question becomes equivalent to the original $p$-curvature conjecture, since over an open $X$ in $\mathbb A^1_k$ and any object in  $MIC(X/k)$ with vanishing $p$-curvatures, we may take $G_s = \{ 1 \}$ for almost all $s.$  

Finally, we remark that in this whole discussion, we could have replaced $X \hookrightarrow \mathbb A^1_k$ by any smooth variety $X$ such that all vector bundles on $X_s$ are trivial. 
\end{para}


\begin{thebibliography}{DK09-2} 
 
 
\bibitem[And04]{And04} Andr\'e, Y.: {\it Sur la conjecture des $p$-courbures de Grothendieck-Katz et un probl\`eme de Dwork}, in Geometric aspects of Dwork theory. Vol. I, II, 55--112, Walter de Gruyter (2004).

\bibitem[Bos01]{Bos01} Bost, J.-B.: {\it Algebraic leaves of algebraic foliations over number fields}, Publ. math. I. H. \'E. S. {\bf 93}  (2001), 161--221.

\bibitem[BK08]{BK08} Brenner, H., Kaid, A.: {\it On deep Frobenius descent and flat bundles},  Math. Res. Let. {\bf 15} (2008), 10001--10015.




\bibitem[Chu85]{Chu85} Chudnovsky, D, Chudnovsky, G.: {\it Applications of Pad\'e approximation to the Grothendieck conjecture on linear differential equations}, in Number Theory, Lecture Notes in Math. {\bf 1135} (1985), 52--100.

\bibitem[Del71]{Del71} Deligne, P.: {\it Th\'eorie de Hodge: II}, Publ. math. I. H. \'E. S. {\bf 40} (1971), 5--57.

\bibitem[DM82]{DM82} Deligne, P., Milne, J.: {\it Tannakian Categories}, in Lecture Notes in Mathematics {\bf 900} (1982), 101--228. 




\bibitem[EM10]{EM10} 
Esnault, H.,   Mehta, V.: {\it 
Simply connected projective manifolds
in characteristic $p > 0$ have no non-trivial stratified
bundles,} Invent. math. {\bf 181} (2010), 449--465. 
(Erratum available at  \url{http://www.mi.fu-berlin.de/users/esnault/preprints/helene/95-erratum-prop3.2.pdf}.

\bibitem[EL13]{EL13} 
Esnault, H., Langer, A.: {\it On a positive equicharacteristic variant of the $p$-curvature conjecture}, Documenta Mathematica {\bf 18} (2013), 23--50. 


\bibitem[Gie75]{Gie75} Gieseker, D.: {\it Flat vector bundles and the fundamental group in non-zero characteristics}, Ann. Scu. Norm. Sup. Pisa, 4. s\'erie {\bf  2} (1)  (1975), 1--31. 
\bibitem[Gie77]{Gie77} Gieseker, D.: {\it On the moduli of vector bundles on an algebraic surface}, Annals of Math.  {\bf 106}  (1) (1977),  45--60. 





\bibitem[EGA]{EGA}  Grothendieck, A.  Dieudonn\'e, J,  {\it \'El\'ements de g\'eom\'etrie alg\'ebrique I,II,III,IV},  Publ. math. I.H.\'E.S. {\bf 4, 8, 11, 17, 20, 24, 28, 32} (1961-67).

\bibitem[Jor78]{Jor78} Jordan, C.: {\it M\'emoire sur les \'equations diff\'erentielles lin\'eaires \`a int\'egrale alg\'ebrique}, \OE{}uvres II, 13--140. 

\bibitem[Kat70]{Kat70} Katz, N.: {\it Nilpotent connections and the monodromy theorem: Applications of a result of Turrittin}, Publ. math. I.H.\'E.S. {\bf 39} (1970), 175--232. 

\bibitem[Kat72]{Kat72} Katz, N.: {\it  Algebraic Solutions of Differential Equations ($p$-curvature and the Hodge Filtration)}, Invent. math. {\bf 18} (1972), 1--118.

\bibitem[Kat82]{Kat82} Katz, N.: {\it A conjecture in the arithmetic of differential equations}, Bull. S.M.F. {\bf 110} (1982), 203--239.


\bibitem[Lan56]{Lan56} Lang, S.: {\it Algebraic groups over finite fields}, Amer. J. Math. {\bf 78} (1956), 555--563.

\bibitem[LS77]{LS77} Lange, H., Stuhler, U.: {\it Vektorb\"undel auf Kurven und Darstellungen der algebraischen Fundamentalgruppe}, Math. Ann. {\bf 156} (1977), 73--83.

\bibitem[Lan04]{Lan04} Langer, A.: {\it Semistable sheaves in positive characteristic}, Annals of  Math. {\bf 159} (2004), 251--276.

\bibitem[Lan14]{Lan14} Langer, A.: {\it Semistable modules over Lie algebroids in positive characteristic},  Documenta Mathematica  {\bf 19} (2014), 509--540. 


\bibitem[MP03]{MP03} Matzat, H., van der Put, M.: {\it Iterative differential equations and the Abhyankar conjecture}, J. reine ang. Math. {\bf 557} (2003), 1--52.

\bibitem[Mat06]{Mat06} Matzat, H.: {\it Differential equations and finite groups}, J. of Algebra {\bf 300} (2006), 673--686.

\bibitem[Nor76]{Nor76} Nori, M.: {\it On the representations of the fundamental group}, Compositio Math. {\bf 33} (1976), no. 1, 29–41. 

\bibitem[Nor82]{Nor82} Nori, M.: {\it The fundamental group-scheme}, Proc. Indian Acad. Sci. (Math. Sci.), {\bf 91} (2) (1982), 73--122. 


\bibitem[Sim90]{Sim90} Simpson, C.: {\it Harmonic bundles on noncompact curves}, J. Amer.Math. Soc.  {\bf 3} (1990), 713--770.












\end{thebibliography}
\end{document}